\newcommand{\kron}{\otimes}
\newcommand{\bfd}{{\bm d}}
\newcommand{\bfe}{{\bm e}}
\newcommand{\bfx}{{\bm x}}
\newcommand{\bfr}{{\bm r}}
\newcommand{\bfzero}{{\bm 0}}
\newcommand{\mxA}{{\bm A}}
\newcommand{\mxB}{{\bm B}}
\newcommand{\mxD}{{\bm D}}
\newcommand{\mxI}{{\bm I}}
\newcommand{\mxK}{{\bm K}}
\newcommand{\mxL}{{\bm L}}
\newcommand{\mxT}{{\bm T}}
\newcommand{\mxU}{{\bm U}}
\newcommand{\mxV}{{\bm V}}
\newcommand{\mxW}{{\bm W}}
\newcommand{\mxSigma}{{\bm \Sigma}}
\newcommand{\bo}[1]{{\bm #1}}
\newcommand{\mx}[1]{{\bm #1}}
\newcommand{\sslink}[1]{\hyperref[ssec:#1]{\S\ref*{ssec:#1}}}
\newcommand{\vecop}{\mbox{vec}}
\def\Re{{\cal R}}
\title{SINGULAR VALUE DECOMPOSITION APPROXIMATION VIA KRONECKER SUMMATIONS 
FOR IMAGING APPLICATIONS}
\author{CLARISSA~GARVEY\thanks{Department of Mathematics and Computer Science, 
Emory University, Atlanta, GA, USA. \texttt{(ccgarve@emory.edu, chang.meng@emory.edu, jnagy@emory.edu)}. This work was supported by grant no.~DMS-1522760 from the US National Science Foundation.}
\and CHANG MENG\footnotemark[1]
\and JAMES~G.~NAGY\footnotemark[1]
}
\begin{document}

\maketitle

\begin{abstract}
\noindent
In this paper we propose an approach to approximate a truncated singular value decomposition of a large structured matrix.  By first decomposing the matrix into a sum of Kronecker products, our approach can be used to approximate a large number of singular values and vectors more efficiently than other well known schemes, such as randomized matrix algorithms or iterative algorithms based on Golub-Kahan bidiagonalization. We provide theoretical results and numerical experiments to demonstrate the accuracy of our approximation and show how the approximation can be used to solve large scale ill-posed inverse problems, either as an approximate filtering method, or as a preconditioner to accelerate iterative algorithms.
\end{abstract}

\begin{keywords}
inverse problems, Kronecker products, regularization, SVD, image restoration, image reconstruction
\end{keywords}

{\small\bf AMS Subject Classifications:} 65F20, 65F30

\pagestyle{myheadings} \thispagestyle{plain} \markboth{C.~GARVEY, C.~MENG
AND J.~NAGY}{SVD APPROXIMATION VIA KRONECKER SUMMATION}

\section{Introduction} \label{sec:introduction}
In this paper we are concerned with computing approximations of large scale linear systems that arise from discretization of ill-posed inverse problems in imaging applications. In these applications, the aim is to compute an approximation of a vector $\bfx$ from measured data $\bfd$,
\begin{equation}
\label{eq:DIP}
  \bfd = \mxK \bfx + \bfe,
\end{equation}
where $\mxK$ is an ill-conditioned matrix whose singular values tend to zero with no significant gap to indicate numerical rank, and $\bfe$ represents unknown data measurement errors. Due to the ill-conditioning of $\mxK$ and the presence of noise, regularization is necessary to compute an accurate approximation of $\bfx$ \cite{calvetti2007bayesian, engl2000regularization, hansen2010discrete, mueller2012linear, vogel2002computational}.

A well known approach for regularization is to use a singular value decomposition (SVD) filtering technique. Specifically, if the SVD of $\mxK \in \Re^{N \times N}$ is denoted by
$$
  \mxK = \mxU \mxSigma \mxV^T = \mxU \mbox{diag}\left(\sigma_1, \cdots, \sigma_N\right)\mx{V}^T
$$ 
then an SVD filtered approximate solution of $\bfx$ is given by
\begin{equation}
\label{eq:SVDfilter}
  \bfx_F = \mxV \mxSigma_F^\dagger \mxU^T\bfd
  = \mxV \mbox{diag}\left(\frac{\phi_1}{\sigma_1}, \cdots, \frac{\phi_N}{\sigma_N}\right) \mxU^T\bfd\,,
\end{equation}
where it is assumed that if $\sigma_i = 0$, then $\phi_i/\sigma_i = 0$.
The choice of regularization scheme defines the filter factors $\phi_i$.
For example, in the case of truncated SVD (TSVD),
$$
  \phi_i = \left\{ \begin{array}{ll} 1 & \mbox{ if } \; i \leq k \\ 0 & \mbox{ if } \; i > k\,, \end{array} \right.
$$
where $k$ is a specified truncation index.  Another well known example is Tikhonov filtering where, for a chosen regularization parameter $\alpha$,
$$
  \phi_i = \frac{\sigma_i^2}{\sigma_i^2 + \alpha^2} 
$$
(see \text{\cite{tikhonov1977solutions}}).

In the case of TSVD, suppose we partition the matrices to identify terms above and below the truncation index:
\begin{equation}
\label{eq:FullSVD}
  \mxK = \left[ \begin{array}{cc} \mxU_k & \mxU_0 \end{array} \right]
  \left[ \begin{array}{cc} \mxSigma_k & 0 \\ 0 & \mxSigma_0 \end{array} \right]
  \left[ \begin{array}{c} \mxV_k^T \\[3pt] \mxV_0^T \end{array} \right]
  = \mxU_k\mxSigma_k\mxV_k^T + \mxU_0\mxSigma_0\mxV_0^T\,,
\end{equation}
where $\mxSigma_k = \mbox{diag}(\sigma_1, \sigma_2, \ldots, \sigma_k)$; $\mxU_k$ and $\mxV_k$ are, respectively, the first $k$ columns of $\mxU$ and $\mxV$; and the other submatrices are defined accordingly.  Then we can define the TSVD operator as
\begin{equation}
\label{eq:TSVDoperator}
  \mxK_\text{TSVD} = \mxU_k\mxSigma_k\mxV_k^T\,,
\end{equation}
and the TSVD filtered solution as
\begin{equation}
\label{eq:TSVDSol}
  \bfx_\text{TSVD} = \mxK^{\dagger}_\text{TSVD}\bfd
  = \mxV_k\mxSigma_k^{-1}\mxU_k^T\bfd\,.
\end{equation}

The idea of SVD filtering is motivated by the fact that if we compute the inverse solution 
$$
  \bfx_\text{inv} = \mxK^{-1}\bfd = \mxK^{-1}(\mxK\bfx + \bfe) = \bfx + \mxK^{-1}\bfe\,,
$$
then dividing by the smallest singular values will highly amplify the noise $\bfe$. SVD filtering avoids dividing by these small values.

Although the above description is for square matrices, the ideas easily extend to over and underdetermined matrices. Moreover, the specific choices of parameters, such as the truncation index $k$ for TSVD filtering and the value of $\alpha$ for Tikhonov filtering, depend on the problem and data.  There are computational methods such as generalized cross validation, discrepancy principle, L-curve, etc., to help choose appropriate values; details exist in literature on inverse problems \cite{calvetti2007bayesian, engl2000regularization, hansen2010discrete, mueller2012linear, vogel2002computational}.

A major drawback of SVD filtering is that it has high computational cost for large scale problems, such as those that arise in 2-dimensional and 3-dimensional imaging applications.  For example, in image restoration $\bfd$ is a vector representation of an observed blurred image, $\bfx$ is a vector representation of the corresponding clean image, and $\mxK$ models the blurring operation. If the images that define $\bfd$ and $\bfx$ have $n \times n$ pixels and $n^2 = N$, then $\bfd, \; \bfx \in \Re^N$ and the blurring operator $\mxK \in \Re^{N \times N}$.  Typical image sizes are at least $256 \times 256$ pixels and often larger, making the blur operator $\mxK$ at least $65536 \times 65536$.  Computing an SVD of such a large matrix is, in most cases, prohibitively expensive. However, there are exceptions.

In the case of spatially-invariant blur the operator may be structured in a way that enables cheap computation. The exact structure of the operator depends on the corresponding point-spread function (PSF), which represents the blur incurred on a single point-source of light, and on chosen boundary conditions. For specific boundary conditions, the resulting structure enables cheap computation. For example, in the case of periodic boundary conditions, the SVD can be replaced by an equivalent Fourier based spectral decomposition, and implemented in $O(N\log(N))$ $=$ $O(n^2\log(n))$ floating point operations using fast Fourier transforms (FFTs) \cite{nagy2006deblurring}. Although these methods are fast, their performance degrades for non-periodic images \cite{nagy2006deblurring,kamm1998kronecker}. One of the main motivations of this work is to reasonably accurately, but still cheaply, enable an approximate factorization for a larger variety of structures.

For matrices that do not have directly exploitable structure, the SVD and similar direct factorization methods, such as
rank revealing decompositions \cite{fierro1995accuracy}, are very expensive to compute for large scale problems. It is therefore often necessary to use iterative Krylov subspace methods \cite{engl2000regularization,vogel2002computational} to make the problems computationally tractable. These can be implemented directly on the system $\mxK \bfx = \bfd$ or the least squares problem $\displaystyle \min_{\bfx}\|\mxK\bfx - \bfd\|_2$, and regularization is enforced through early termination of the iterations; this is referred to as iterative regularization. An alternative approach is to use an iterative method on a damped least squares problem, e.g., in the case of Tikhonov regularization
$$
 \min_{\bfx} 
 \left\|
  \left[ \begin{array}{c}
    \mxK \\
    \alpha \mxI
  \end{array}\right]
  \bfx
  -
  \left[ \begin{array}{c}
    \bfd \\
    \bfzero
  \end{array} \right]
 \right\|_2\,.
$$
Such methods can attain accelerated convergence through the use of preconditioning. For well-posed problems, good preconditioners approximate the system matrix (or its inverse) and are cheap to apply; more accurate approximations typically lead to faster convergence.  However, for ill-posed problems we typically do not want to invert small singular values of the matrix (this is most easily seen in the TSVD filter). A good preconditioner should therefore only approximate the matrix corresponding to large singular values of $\mxK$.  This observation is the second motivation of the method presented in this paper.

In this work, we explore an approach to compute approximations of the largest singular values and corresponding singular vectors of a large scale matrix $\mxK$. To do so, we use the Kronecker product operator $\otimes$,
$$
\mxA \kron \mxB
=
\begin{bmatrix}
  a_{11}\mxB & a_{12}\mxB & \hdots & a_{1n}\mxB\\
  a_{21}\mxB & a_{22}\mxB & \hdots & a_{2n}\mxB\\
  \vdots & \vdots & & \vdots \\
  a_{n1}\mxB & a_{n2} \mxB & \cdots & a_{nn}\mxB
\end{bmatrix}.
$$
The basis of the approximation is a decomposition of $\mx{K}$ into a sum of Kronecker products \cite{van1993approximation}. If $\mxK \in \Re^{N \times N}$, then there exist $\mxA_i, \, \mxB_i \in \Re^{n \times n}$ such that 
\begin{equation}\label{equation:krondecomposition}
\mxK = \sum\limits_{i=1}^R \mxA_i \otimes \mxB_i \, .
\end{equation}
$R$, the number of terms needed for this summation to be exact, is known as the Kronecker rank of $\mx{K}$. Section \ref{sec:kron_sum} further details this decomposition.

The decomposition (\ref{equation:krondecomposition}) is the foundation of our TSVD approximation. We detail how to construct the approximated TSVD from the Kronecker summation decomposition in Section \ref{sec:method}. The approximated TSVD can be used directly to approximate SVD filtered solutions, or indirectly as preconditioners for iterative methods. Our algorithm seeks to improve on existing methods using Kronecker summation decompositions to construct approximate TSVDs (see Section~\ref{sec:method} and Section~\ref{ssec:related_kron} for details of prior Kronecker-based work).

There are alternatives to Kronecker product decomposition approaches. For example, the iterative Golub-Kahan bidiagonalization (GKB) method \cite{golub2013matrix,larsen2004propack} can be used to estimate some of the large singular values and corresponding singular vectors (e.g., as implemented in MATLAB's {\tt svds} function). While this approach is efficient if only a small number (e.g., 10) of singular components are required, it is not computationally attractive in our applications because we may need to compute on the order of  1000 singular values and corresponding vectors. Similar constraints hold for randomized algorithms \cite{halko2011finding}. However, as mentioned in Section~\ref{sec:kron_sum}, GKB and randomized methods may be used to decompose $\mxK$ into a sum of Kronecker products (\ref{equation:krondecomposition}), which is the first step in the method proposed in this paper. Further details of these alternative methods are in Section~\ref{sec:related_works}.

One of the primary strengths of the proposed TSVD algorithm is its computational speed, especially relative to the existing alternative methods. Section~\ref{ssec:complexity} contains a derivation of the time complexity of the algorithm. We also tested our method to see how well it runs in practice; this can be found in Section~\ref{sec:experiments}. The method is both fast and accurate enough to be useful in a variety of settings.

\section{Kronecker Sum Decomposition}
\label{sec:kron_sum}

Van Loan and Pitsianis proposed a computational approach for decomposing a general matrix into a sum of Kronecker products (\ref{equation:krondecomposition}) \cite{van1993approximation}. This approach requires taking the singular value decomposition of a rearrangement $\widetilde{\mx{K}}$ of the matrix $\mx{K}$; for details, see \cite{van1993approximation}. The singular vectors of $\widetilde{\mx{K}}$, scaled by the square root of the corresponding singular values, are rearranged into matrices to form the terms $\mx{A}_i$ and $\mx{B}_i$ in (\ref{equation:krondecomposition}).

Our aim is to compute approximations of the largest singular values
and corresponding singular vectors of the large matrix $\mxK$, but we begin the process by first solving the same problem for a different and equally large matrix, $\widetilde{\mx{K}}$. An obvious question is: Why should this save any computational costs? The answer is that often the Kronecker rank of $\mx{K}$ (which is the usual matrix rank of $\widetilde\mxK$) is substantially smaller than the rank of $\mxK$.  A simple example to illustrate this is the discrete 2-dimensional Laplacian matrix on an $n \times n$ grid,
$$
  \mxL
  =
  \left[ \begin{array}{rrrr}
    \mxT & -\mxI & & \\ -\mxI & \mxT & \ddots & \\
    & \ddots & \ddots & -\mxI \\
    & & -\mxI & \mxT
  \end{array} \right]
  \,, \quad
  \mxT
  =
  \left[ \begin{array}{rrrr}
    4 & -1 & & \\
    -1 & 4 & \ddots & \\
    & \ddots & \ddots & -1 \\
    & & -1 & 4
  \end{array} \right]
$$
where rank$(\mxL) = n^2$ but rank$(\widetilde\mxL) = 2$. Although this example is trivial, it indicates that we can expect the Kronecker rank to be significantly less than the matrix rank for certain structured and sparse matrices.  In particular, in some imaging applications the Kronecker rank of $\mx{K} \in \Re^{N \times N}$ is at most $n = \sqrt{N}$ and is often much smaller in practice. Moreover, by exploiting structure of the matrix, the actual computational cost of computing the Kronecker sum decomposition is at most $O(n^3) = O(N^{3/2})$; for details, see \cite{kamm1998kronecker, kamm2000optimal, kilmer2007kronecker, nagy2006kronecker, nagy2003kronecker, perrone2006kronecker}. If the matrix does not have such an exploitable structure, but is sparse, then GKB or randomized methods applied to the matrix $\widetilde\mxK$ are potential alternatives.

Computing the exact decomposition (\ref{equation:krondecomposition}) of $\mx{K}$ is cheap for well-structured matrices, but using an inexact decomposition will lower the cost of later steps in computing an approximate TSVD (this is described in Section~\ref{ssec:complexity}). If there is a large gap in the singular values of $\widetilde\mxK$ between indices $r$ and $r+1$, where $r < R$, then
$$
  \mxK \approx \sum\limits_{i=1}^r \mxA_i \otimes \mxB_i\,,
$$
provides a good estimate of $\mx{K}$.  This is explored in more detail through numerical experiments in Section~\ref{sec:experiments}.

\section{Method} 
\label{sec:method}
In this section we present an algorithm for computing an SVD approximation of $\mxK$ using the Kronecker sum decomposition (\ref{equation:krondecomposition}). From the discussion in the previous section, each of the matrices $\mxA_i$ and $\mxB_i$ correspond to the singular values and vectors of $\widetilde\mxK$. The $i\textsuperscript{th}$ term in the summation corresponds to the $i\textsuperscript{th}$ most significant singular value of $\widetilde\mxK$. We therefore have
$$
  \mxA_1 \kron \mxB_1 = \arg\min_{\mxA,\mxB} \| \mxK - \mxA \kron \mxB\|_F
$$
and, intuitively, $\mx{A}_1$ and $\mx{B}_1$ contain the most information about $\mxK$ of any of the individual pairs of $\mxA_i$ and $\mxB_i$. We therefore treat $\mx{A}_i$ and $\mx{B}_i$ separately from the other terms in the summation.

Computing the SVD of $\mxA_1 \kron \mxB_1$ only requires computing the SVDs of the small matrices $\mxA_1$ and $\mxB_1$. If $\mx{A}_1 = \mx{U}_A \mx{\Sigma}_A \mx{V}_A^T$ and $\mx{B}_1 = \mx{U}_B \mx{\Sigma}_B \mx{V}_B^T$, then by the properties of Kronecker products
\begin{align*}
\mxA_1 \kron \mxB_1 & = \mxU_A \mxSigma_A \mxV_A^T \kron \mxU_B \mxSigma_B \mxV_B^T\\
& = (\mxU_A \kron \mxU_B) (\mxSigma_A \kron \mxSigma_B) (\mxV_A \kron \mxV_B)^T\\
& = \mxU_1 \mxSigma_1 \mxV_1^T.
\end{align*}
Note that $\mxU_1 = \mxU_A \kron \mxU_B, \; \mxSigma_1 = \mxSigma_A \kron \mxSigma_B,$ and $\mxV_1 = \mxV_A \otimes \mxV_B$ are never formed explicitly; maintaining the Kronecker product forms is spatially cheaper and computationally faster.

Having computed $\mx{K}_1 = \mx{U}_1 \mx{\Sigma}_1 \mx{V}_1^T$, we could then consider using the SVD approximation
$$
   \mxK \approx \mxU_1 \mxSigma_1 \mxV_1^T\,,
$$
which is inexpensive to both construct and to apply as either an approximate filtering method or as a preconditioner.  Multiplications with $\mx{U}_1$ and $\mx{V}_1$ (or their transposes) are cheap due to a property of Kronecker products. For example, to multiply the matrix $\mx{U}_1^T$ with $\bo{d}$ as in (\ref{eq:SVDfilter}), we compute
$$
  \mxU^T_1\bfd = (\mxU_A^T \kron \mxU_B^T)\bfd 
    = \vecop(\mxU_B^T\mxD\mxU_A)\,,
  \quad \bfd = \vecop(\mxD)\,,
$$
where $\vecop(\mx{D})$ is the reshaping of $\bo{d}$ into a matrix in column-major order. A similar multiplication works for $\mxV_1$. Multiplication with the diagonal matrix $\mx{\Sigma}_1$ is cheaper still.

The disadvantage of this simple approach is that it uses only the first term in the Kronecker sum decomposition. Kamm and Nagy \cite{kamm1998kronecker, nagy1996decomposition} proposed using more terms via the approximation
$$
  \mxK \approx \mxU_1 \widehat\mxSigma_1 \mxV_1^T
$$
where $\mx{U_1}$ and $\mx{V_1}$ are as described above, and $\widehat\mxSigma_1 = \mbox{diag}(\mxU_1^T\mxK\mxV_1)$.  In this approximation, the singular vectors are fixed to be those coming from the first term in the Kronecker sum decomposition, and $\widehat\mxSigma_1$ is the best diagonal matrix in the sense that it minimizes $\displaystyle \|\mxK - \mxU_1 \widehat\mxSigma \mxV_1^T\|_F$ over all diagonal
matrices $\widehat\mxSigma$.

This baseline method is both computationally efficient and uses more information than just $\mx{A}_1$ and $\mx{B}_1$. However, the singular vectors are constructed using only the first term of the Kronecker sum decomposition. Moreover, the diagonal entries of $\widehat\mxSigma_1$ may be negative, violating the concept of a singular value. These limitations warrant a different approach.

We now describe an alternative method that provides better approximations of the singular values and singular vectors. With the Kronecker sum decomposition (\ref{equation:krondecomposition}), we again begin with the SVD of the first term, $\mxA_1 \kron \mxB_1 = \mxU_1 \mxSigma_1 \mxV_1^T$. Then we rewrite $\mxK$ as
\begin{align*}
\mxK &= \sum\limits_{i=1}^R \mxA_i \kron \mxB_i\\
& = \mxU_1 \mxSigma_1 \mxV_1^T + \sum\limits_{i=2}^R \mxA_i \kron \mxB_i\\
& = \mxU_1\left(\mxSigma_1 + \mxU_1^T\left(\sum\limits_{i=2}^R \mxA_i \kron B_i\right)\mxV_1 \right) \mxV_1^T \\
& = \mxU_1 \left(\mxSigma_1 + \mxW \right) \mxV_1^T\,.
\end{align*}

We wish to compute approximations of the $k$ largest singular values and corresponding
singular vectors of $\bo{K}$. Because $\mx{K}_1 = \mxA_1 \otimes \mxB_1$ is the most significant of the summation terms, we want to use the most significant singular values of $\bo{\Sigma}_1$ in this computation; incorrectly estimating or omitting the largest singular values of $\bo{\Sigma}_1$ causes significant error in the final computed TSVD approximation. The singular values of $\bo{\Sigma}_A$ and $\bo{\Sigma}_B$ are sorted, but when their Kronecker product is taken to get $\bo{\Sigma}_1$, the result is in a sawblade-like (not monotonic) ordering. So simple truncation of $\mx{\Sigma}_1$ (directly or by truncating $\mx{\Sigma}_A$ and $\mx{\Sigma}_B$) does not produce the $k$ most significant entries of $\mx{\Sigma}_1$.

The most straightforward way around this is to reorder the singular values into sorted order. If $\mx{P}$ is the permutation that reorders $\mx{\Sigma}_1$ so that $\mx{P}^T \mx{\Sigma}_1 \mx{P}$ has its diagonal sorted in descending magnitude, we can re-write $$\mx{K}_1 = \mx{U}_1 \mx{P} (\mx{P}^T \mx{\Sigma}_1 \mx{P}) \mx{P}^T \mx{V}_1^T.$$

From there,
$$\mx{K} = \mx{U}_1 \mx{P} [\mx{P}^T (\mx{\Sigma}_1 + \mx{W}) \mx{P}] \mx{P}^T \mx{V}_1^T.$$
Then let $\sigma_{1,j}$ denote the $j$\textsuperscript{th} singular value in $\mxSigma_1$, and define the matrix
$
\mxSigma_{1,k} = \text{diag}(\sigma_{1,1}, \hdots, \sigma_{1,k}) 
$
containing the largest $k$ singular values of $\mxA_1 \kron \mxB_1$, i.e., the first $k$ entries of $\mx{P}^T \mx{\Sigma}_1 \mx{P}$. Similarly, define diagonal matrix 
$
\widehat{\mxSigma}_{0} = \mbox{diag}(\sigma_{1,k+1}, \ldots, \sigma_{1,N})
$
containing the remaining smallest singular values of $\mxA_1 \kron \mxB_1$ (the notation will be clear within the following derivation). Define $\mx{U}_1 \mx{P} = \bar{\mx{U}}_1$ and $\mx{V}_1 \mx{P} = \bar{\mx{V}}_1$, and partition
$\mx{P}^T \mx{W} \mx{P} =
\left[\begin{array}{cc}
  \mx{W}_{11} & \mx{W}_{12}\\
  \mx{W}_{21} & \mx{W}_{22}
\end{array}\right]$. With this notation, we can write $\mxK$ as
\begin{eqnarray*}
\mxK & = &
  \bar{\mx{U}}_1
  \left(
    \left[ \begin{array}{cc}
      \mxSigma_{1,k} & 0 \\
      0 & \widehat{\mxSigma}_0
    \end{array} \right]
    +
    \left[ \begin{array}{cc}
      \mxW_{11} & \mxW_{12} \\
      \mxW_{21} & \mxW_{22}
    \end{array} \right]
  \right)
  \bar{\mx{V}}_1^T \\
& = &
  \bar{\mx{U}}_1
  \left(
    \left[ \begin{array}{cc}
      \mxSigma_{1,k} + \mxW_{11} & 0 \\
      0 & \widehat{\mxSigma}_0
    \end{array} \right]
    +
    \left[ \begin{array}{cc}
      0 & \mxW_{12} \\
      \mxW_{21} & \mxW_{22}
    \end{array} \right]
  \right)
  \bar{\mx{V}}_1^T \\
& = &
  \bar{\mx{U}}_1
  \left(
    \left[ \begin{array}{cc}
      \widehat{\mx{U}}_t \widehat{\mxSigma}_t \widehat{\mx{V}}_t^T & 0 \\
      0 & \widehat{\mx{\Sigma}}_0
    \end{array} \right]
    +
    \left[ \begin{array}{cc}
      0 & \mxW_{12} \\
      \mxW_{21} & \mxW_{22}
    \end{array} \right]
  \right)
  \bar{\mx{V}}_1^T \\
& = &
  \bar{\mx{U}}_1
  \left[ \begin{array}{cc}
    \widehat{\mx{U}}_t & 0 \\
    0 &\mxI
  \end{array} \right]
  \left(
    \left[ \begin{array}{cc}
      \widehat{\mxSigma}_t & 0 \\
      0 & \widehat{\mxSigma}_0
    \end{array} \right]
    +
    \left[ \begin{array}{cc}
      0 & \widehat{\mx{U}}_t^T\mxW_{12} \\
      \mxW_{21} \widehat{\mx{V}}_t & \mxW_{22}
    \end{array} \right]
  \right)
  \left[ \begin{array}{cc}
    \widehat{\mx{V}}_t^T & 0 \\
    0 & \mxI
  \end{array} \right]
  \bar{\mx{V}}_1^T \\
& = &
  \widehat{\mx{U}}
  \left(
    \left[ \begin{array}{cc}
      \widehat{\mxSigma}_t & 0 \\
      0 & \widehat{\mx{\Sigma}}_0
    \end{array} \right]
    +
    \left[ \begin{array}{cc}
      0 & \widehat\mxW_{12} \\
      \widehat\mxW_{21} & \mxW_{22}
    \end{array} \right]
  \right)
  \widehat{\mx{V}}^T 
\end{eqnarray*}
where $\widehat{\mxU}_t \widehat{\mxSigma}_t \widehat{\mxV}_t^T$ is the SVD of the $k \times k$ matrix $\mxT = \mxSigma_{1,k} + \mxW_{11}$, and we define $\widehat\mxW_{12} = \widehat{\mxU}_t^T\mxW_{12}$ and $\widehat\mxW_{21} = \mxW_{21}\widehat{\mx{V}}_t$. If $k = N$ (the size of $\mxK$), then we have the full SVD of $\mxK$.  When $N$ is large, computing the full SVD of $\mxK$ is impractical, but if $k$ is of modest size (e.g., on the order of 1000) then it is feasible to compute the SVD of $\mxT$.

We have
\begin{equation}
\label{eq:modifiedSVD}
\mxK = \widehat\mxU \left( \left[ \begin{array}{cc} \widehat\mxSigma_t & 0 \\ 0 & \widehat\mxSigma_{0}\end{array} \right]
  + \left[ \begin{array}{cc} 0 & \widehat\mxW_{12} \\ \widehat\mxW_{21} & \mxW_{22} \end{array} \right]
  \right) \widehat\mxV^T\,.
\end{equation}
From this, let $\widehat{\mx{\Sigma}}_k = \widehat{\mx{\Sigma}}_t$, $\widehat\mxU_k$ be the a matrix containing the first $k$ columns of $\widehat\mxU$, and 
$\widehat\mxV_k$ be the matrix containing the first $k$ columns of $\widehat\mxV$. Using this notation, we can form a truncated SVD approximation
of $\mxK$ as
\begin{equation}
\label{eq:approximateTSVDoperator}
  \mx{K}_{\mbox{\footnotesize} TSVD} \approx \widehat{\mx{U}}_k \widehat{\mx{\Sigma}}_k \widehat{\mx{V}}_k^T \, .
\end{equation}
Note that we can also expedite computation of $\mxW$ by using a truncated Kronecker sum decomposition, $\mx{K} \approx \sum\limits_{i=1}^r \mx{A}_i \kron \mx{B}_i$ (recall that we use $R$ to denote the full Kronecker rank of $\mxK$, and $r \leq R$ to denote an approximate Kronecker rank).

In actual implementations, the matrices $\widehat\mxU_k$ and $\widehat\mxV_k$ are not formed explicitly. Instead, the components are stored individually. For example, $\mx{U}_A$ and $\mx{U}_B$  are stored to represent $\mx{U}_1$, the reordering map is kept rather than the full permutation matrix $\mx{P}$, and $\mxU_t$ is stored explicitly due to its small size. Then, multiplications are computed as a sequence of operations (multiplications, reordering, and truncation). For example, to compute the product $\widehat{\mx{V}}^T\bo{d}$,
\begin{enumerate}
\item compute $(\mx{V}_A^T \kron \mx{V}_B^T)\bo{d}$ using Kronecker properties,
\item permute the result using the the mapping representing left multiplication with $\mx{P}^T$,
\item truncate to $k$ rows, and then
\item left multiply the result by $\mx{V}_t^T$.
\end{enumerate}

Multiplication with $\widehat{\mx{U}}_k^T$ follows the same pattern, and multiplications with $\widehat{\mx{V}}_k$ and $\widehat{\mx{U}}_k$ follow the pattern in reverse with the transpose of truncation being padding with zeros. Using this approach enables storage even as $N$ gets large: the storage cost is $O(N + k^2) = O(n^2 + k^2)$, whereas storage of the full matrices $\widehat{\mx{U}}_k$ and $\widehat{\mx{V}}_k$ requires $O(Nk)$ space. Because $k \ll N$, the difference is large.

To summarize, we have developed an algorithm that leverages the benefits of Kronecker products to decompose the matrix $\mxK$ into an approximated SVD.  The algorithm uses two levels of approximation: the number of terms used in the Kronecker sum decomposition of $\mxK$, and the truncation index $k$ used to determine the size of $\mxT = \mxU_t \mxSigma_t \mxV_t^T$. Leveraging these approximations results in considerable time and storage savings.

\subsection{Time Complexity Analysis}
\label{ssec:complexity}

In this subsection we show that our method is computationally efficient, with an $O(n^3r + k^2r + k^3)$ running time for our applications. Although this is slower than the baseline method \cite{nagy2003kronecker, kamm1998kronecker}, which runs in $O(n^3r)$ on the same applications, the method is still computationally feasible for moderate choices of $k$. 

The computational cost for each step of the truncated SVD algorithm is as follows:

\begin{itemize}
\item The first step of computing the Kronecker sum decomposition (\ref{equation:krondecomposition}) is critical, but the time complexity depends on the structure and sparsity of $\mx{K}$.  For imaging applications considered in this paper, the cost is at most $O(n^3)$, where it is assumed images have $n \times n$ pixels, and $\mxK$ is an $N \times N$ matrix, with $N = n^2$.  In more general cases where $\mxK$ is sparse, then the cost for computing the Kronecker sum decomposition will depend on the level of sparseness, and on the chosen Kronecker rank, $r$. Without sparsity or structure in $\mx{K}$, the cost is $O(N^3) = O(n^6)$, which is typically infeasible for large $n$.
\item The cost of computing SVDs of $\mxA_1$ and $\mxB_1$ is $O(n^3)$.
\item Sorting the diagonal $\mx{\Sigma}_1$ takes $O(n^2log(n))$  time. This also gives the permutation mapping used in the next step.
\item Recall $\mx{P}^T \mx{W} \mx{P} =
\left[\begin{array}{cc}
  \mx{W}_{11} & \mx{W}_{12}\\
  \mx{W}_{21} & \mx{W}_{22}
\end{array}\right]$ where $\mx{W} = \sum\limits_{i=2}^r\left(\mxU_{A}^T \mx{A}_i \mx{V}_A \kron \mx{U}_B^T \mx{B}_i \mx{V}_B \right)$. Computing $\mx{W}_{11}$ has two main steps per index in the summation: forming the products $\mxU_{A}^T \mx{A}_i \mx{V}_A$ and $\mx{U}_B^T \mx{B}_i \mx{V}_B$, and using the permutation mapping to multiply entries of each product to their correct entry of $\mxW_{11}$. Forming the matrix products takes $O(n^3)$ time because the matrices are size $n \times n$. Applying the permutation mapping and computing the $i^\text{th}$ partial sum of $\mxW_{11}$ takes $O(k^2)$ time. $r-1$ total iterations are calculated, so the total cost is $O(n^3r + k^2r)$.
\item Forming $\mx{T}$ and computing its SVD takes $O(k^3)$ time.
\end{itemize}
The the total cost of computing the approximate truncated SVD is $O(n^3r + k^2r + k^3)$.

\subsection{Approximation Quality}
\label{sec:Theory}

In this section we provide theoretical results bounding the difference in quality of our TSVD operator approximation (\ref{eq:approximateTSVDoperator}) relative to the true TSVD operator (\ref{eq:TSVDoperator}). We start with bounds on the quality of the computed singular vector subspaces, similar to the results presented by Fierro and Bunch \cite{fierro1995bounding} for the case of URV and ULV factorizations. In their work, they suggested such bounds as a potential diagnostic measure to assess quality of the approximate subspaces. Following the subspace bounds, we then derive bounds for the errors of the approximate pseudoinverse and TSVD solution.

We begin by deriving a bound for the singular vector subspaces. Using the notation for the true SVD defined in equation (\ref{eq:FullSVD}), the ``signal'' subspace of the inverse problem (\ref{eq:DIP}) is the span of the columns of $\mx{U}_k$ and the ``noise'' subspace is the span of the columns of $\mx{V}_0$. One quality measure for approximate TSVD operators is the distance between the true and approximated signal subspaces and noise subspaces. These distances are measured by \cite{fierro1995bounding, fierro1995accuracy, li2011moore}
$$
  \|\mxU_k^T\widehat\mxU_0\|_2 \quad \mbox{and} \quad 
  \|\mxV_k^T\widehat\mxV_0\|_2\,.
$$

\begin{theorem}
\label{theorem:SubspaceQuality}
Consider the factorizations of $\mxK$ given in equations (\ref{eq:FullSVD}) and (\ref{eq:modifiedSVD}), where $\sigma_i$ denotes a true singular value and $\widehat{\sigma}_i$ denotes an approximate singular value. Then
\begin{equation}
\label{eq:SignalSubspaceBound}
  \|\mx{U}_k^T\widehat{\mx{U}}_0\|
  \leq
  \frac{
    \sigma_k \|\widehat{\mx{W}}_{21}\|
    +
    \|\widehat{\mx{W}}_{12}\| \|\widehat{\mx{\Sigma}}_0+\mx{W}_{22}\|
  }{
    \sigma_k^2 - \|\widehat{\mx{\Sigma}}_0 + \mx{W}_{22}\|^2
  }
\end{equation}
and
\begin{equation}
\label{eq:NoiseSubspaceBound}
  \|\mx{V}_k^T \widehat{\mx{V}}_0\|
  \leq
  \frac{
    \sigma_k \|\widehat{\mx{W}}_{12}\|
    +
    \|\widehat{\mx{W}}_{21}\|\ \|\widehat{\mx{\Sigma}}_0 + \mx{W}_{22}\|
  }{
    \sigma_k^2 - \|\widehat{\mx{\Sigma}}_0 + \mx{W}_{22}\|^2
  }
\end{equation}
\end{theorem}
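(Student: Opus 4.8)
The plan is to play the two factorizations against each other. From the true SVD (\ref{eq:FullSVD}) I read off the exact identities $\mxU_k^T\mxK = \mxSigma_k\mxV_k^T$ and $\mxK\mxV_k = \mxU_k\mxSigma_k$, while from the modified factorization (\ref{eq:modifiedSVD}) I read off $\mxK\widehat{\mxV} = \widehat{\mxU}\,\mx{R}$ and $\mxK^T\widehat{\mxU} = \widehat{\mxV}\,\mx{R}^T$, where
\[
\mx{R} = \begin{bmatrix} \widehat{\mxSigma}_t & \widehat{\mxW}_{12} \\ \widehat{\mxW}_{21} & \widehat{\mxSigma}_0 + \mxW_{22} \end{bmatrix}.
\]
I then partition the orthogonal matrices into their leading $k$ columns and the rest, $\widehat{\mxU} = [\,\widehat{\mxU}_k\ \ \widehat{\mxU}_0\,]$ and $\widehat{\mxV} = [\,\widehat{\mxV}_k\ \ \widehat{\mxV}_0\,]$, and write $X = \mxU_k^T\widehat{\mxU}_0$ and $Y = \mxV_k^T\widehat{\mxV}_0$ for the two quantities to be bounded.

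The key step is to evaluate two mixed products in two different ways. Computing $\mxU_k^T\mxK\widehat{\mxV}_0$ via the true SVD gives $\mxSigma_k\mxV_k^T\widehat{\mxV}_0 = \mxSigma_k Y$; computing it via (\ref{eq:modifiedSVD}) and selecting the trailing block columns of $\mx{R}$ gives $(\mxU_k^T\widehat{\mxU}_k)\widehat{\mxW}_{12} + X(\widehat{\mxSigma}_0+\mxW_{22})$. Equating the two yields one cross-relation. Symmetrically, evaluating $\mxV_k^T\mxK^T\widehat{\mxU}_0$ two ways yields $\mxSigma_k X = (\mxV_k^T\widehat{\mxV}_k)\widehat{\mxW}_{21}^T + Y(\widehat{\mxSigma}_0+\mxW_{22})^T$. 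These two coupled identities tie $X$ and $Y$ together through the corner blocks $\widehat{\mxW}_{12},\widehat{\mxW}_{21}$ and the trailing block $\widehat{\mxSigma}_0+\mxW_{22}$.

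I then pass to the spectral norm. Because $\mxU_k^T\widehat{\mxU}_k$ and $\mxV_k^T\widehat{\mxV}_k$ are products of submatrices of orthogonal matrices, their norms are at most $1$; and since $\mxSigma_k$ is invertible with smallest singular value $\sigma_k$, one has $\|\mxSigma_k X\| \ge \sigma_k\|X\|$ and likewise for $Y$. Writing $\rho = \|\widehat{\mxSigma}_0+\mxW_{22}\|$, the two cross-relations collapse to the scalar system
\[
\sigma_k\|X\| \le \|\widehat{\mxW}_{21}\| + \rho\,\|Y\|, \qquad \sigma_k\|Y\| \le \|\widehat{\mxW}_{12}\| + \rho\,\|X\|.
\]
Substituting the second bound into the first and collecting the $\|X\|$ terms gives $(\sigma_k^2-\rho^2)\|X\| \le \sigma_k\|\widehat{\mxW}_{21}\| + \rho\|\widehat{\mxW}_{12}\|$, which upon dividing by $\sigma_k^2-\rho^2$ is exactly (\ref{eq:SignalSubspaceBound}); the symmetric substitution produces (\ref{eq:NoiseSubspaceBound}).

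The derivation is mostly routine norm bookkeeping, so I expect the only genuine subtlety to be the requirement that the denominator $\sigma_k^2-\|\widehat{\mxSigma}_0+\mxW_{22}\|^2$ be positive, i.e.\ the spectral gap $\sigma_k > \|\widehat{\mxSigma}_0+\mxW_{22}\|$. This is what makes the decoupling step valid and the bound finite and meaningful, and it is precisely the statement that the approximate trailing block is separated from the $k$\textsuperscript{th} true singular value; I would state it as a hypothesis (or note that the bound is vacuous otherwise). A secondary point to verify carefully is the correct identification of which block columns of $\mx{R}$ and $\mx{R}^T$ appear when multiplying by $\widehat{\mxV}_0$ and $\widehat{\mxU}_0$, since a mismatch there would swap $\widehat{\mxW}_{12}$ and $\widehat{\mxW}_{21}$ in the final numerators.
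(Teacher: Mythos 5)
Your proof is correct and follows essentially the same route as the paper: the two cross-identities you obtain by evaluating $\mxU_k^T\mxK\widehat{\mxV}_0$ and $\mxV_k^T\mxK^T\widehat{\mxU}_0$ in two ways are exactly the paper's intermediate equations, and the coupled scalar system you solve after taking norms is the same algebra the paper carries out by substituting one matrix identity into the other before taking norms. Your explicit flagging of the gap condition $\sigma_k > \|\widehat{\mxSigma}_0 + \mxW_{22}\|$ is a point the paper leaves implicit in the theorem statement (it only surfaces later in the experiments, where the bound is observed to go negative), so that is a welcome clarification rather than a discrepancy.
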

\begin{proof}
To prove these bounds, first notice that
$$
  \mxK\mxV_k = \mxU_k\mxSigma_k \quad \Rightarrow \quad \mxU_k^T = \mxSigma_k^{-1}\mxV_k^T\mxK^T
$$
and so
\begin{eqnarray}
 \mxU_k^T\widehat\mxU_0 
 & = & 
  \mxSigma_k^{-1}\mxV_k^T
  \left[ \begin{array}{cc}
    \widehat\mxV_k & \widehat\mxV_0
  \end{array} \right]
  \left[ \begin{array}{cc}
    \widehat\mxSigma_k & \widehat\mxW_{21}^T \\
    \widehat\mxW_{12}^T & \widehat\mxSigma_0 + \mxW_{22}^T
  \end{array} \right]
  \left[ \begin{array}{c}
    \widehat\mxU_k^T \\ \widehat\mxU_0^T
  \end{array} \right]
  \widehat\mxU_0^T 
  \nonumber \\ 
 & = & 
  \mxSigma_k^{-1} \mxV_k^T
  \left[ \begin{array}{cc}
    \widehat\mxV_k & \widehat\mxV_0
  \end{array} \right]
  \left[ \begin{array}{c}
    \widehat\mxW_{21}^T \\
    \widehat\mxSigma_0 + \mxW_{22}^T
  \end{array} \right]  
  \nonumber \\ 
 & = & 
  \mxSigma_k^{-1}
  \left(
    \mxV_k^T\widehat\mxV_k \widehat\mxW_{21}^T
    + 
    \mxV_k^T\widehat\mxV_0
    \left(
      \widehat\mxSigma_0 + \mxW_{22}^T
    \right)
  \right)\,. 
\label{eq:UkU0}
\end{eqnarray}
Similarly, observe that
\begin{eqnarray}
 \mxV_k^T \widehat\mxV_0 
 & = & 
  \mx{\Sigma}_k^{-1} \mx{U}_k^T \mx{K} \widehat{\mx{V}}_0 
  \nonumber \\
 & = & 
  \mxSigma_k^{-1} \mx{U}_k^T \widehat{\mx{U}}_k \widehat{\mx{W}}_{12}
  +
  \mx{\Sigma}_k^{-1} \mx{U}_k^T \widehat{\mx{U}}_0
  \left(
    \widehat{\mx{\Sigma}}_0 + \mx{W}_{22}
  \right)\,.
  \label{eq:VkV0}
\end{eqnarray}
Substituting the above relation for $\mxV_k^T\widehat\mxV_0$ into (\ref{eq:UkU0}) we obtain
$$
  \mxU_k^T\widehat\mxU_0
  =
  \mxSigma_k^{-1} \mxV_k^T \widehat\mxV_k \widehat\mxW_{21}^T
  + 
  \mxSigma_k^{-2}
  \left(
    \mxU_k^T \widehat\mxU_k \widehat\mxW_{12}
    +
    \mxU_k^T \widehat\mxU_0
    \left(
      \widehat\mxSigma_0 + \mxW_{22}
    \right)
  \right)
  \left(
    \widehat\mxSigma_0 + \mxW_{22}^T
  \right)\,.
$$
Taking norms,
$$
  \|\mxU_k^T\widehat\mxU_0\|
  \leq
  \frac{1}{\sigma_k} \|\widehat\mxW_{21}\|
  +
  \frac{1}{\sigma_k^2}
  \left(
    \|\widehat\mxW_{12}\| \|\widehat\mxSigma_0 + \mxW_{22}\|
    +
    \|\mxU_k^T \widehat\mxU_0\| \|\widehat\mxSigma_0 + \mxW_{22}\|^2
  \right).
$$
After algebraic manipulation we obtain the bound for the signal subspace:
$$
  \|\mxU_k^T\widehat\mxU_0\|
  \leq
  \frac{
    \sigma_k \|\widehat\mxW_{21}\|
    +
    \|\widehat\mxW_{12}\| \|\widehat\mxSigma_0 + \mxW_{22}\|
  }{
    \sigma_k^2 - \|\widehat\mxSigma_0+\mxW_{22}\|^2
  }\,.
$$
The bound for the noise subspace is proved similarly.  By substituting the relation (\ref{eq:UkU0}) into (\ref{eq:VkV0}) and taking norms, we obtain:
$$
  \|\mxV_k^T \widehat\mxV_0\|
  \leq
  \frac{
    \sigma_k \|\widehat\mxW_{12}\|
    +
    \|\widehat\mxW_{21}\| \|\widehat\mxSigma_0 + \mxW_{22}\|
  }{
    \sigma_k^2 - \|\widehat\mxSigma_0 + \mxW_{22}\|^2
  }\,.
$$
\end{proof}

Next, we develop the bound for the relative error of the pseudoinverse of the TSVD operator approximation  given in (\ref{eq:approximateTSVDoperator}).

\begin{theorem}
\label{theorem:PseudoinverseQuality}
Consider the true TSVD operator $\mxK_\text{TSVD}$ defined in (\ref{eq:TSVDoperator}) and its approximation $\widehat\mxK_\text{TSVD}$ given by (\ref{eq:approximateTSVDoperator}). Define $\varphi = \left(1+\sqrt{5}\right)/2$. Then

\begin{equation}
\label{eq:PseudoinverseBound}
  \frac{
    \left\Vert
      \mxK_\text{TSVD}^\dagger - \widehat\mxK_\text{TSVD}^\dagger
    \right\Vert
  }{
    \left\Vert
      \mxK_\text{TSVD}^\dagger
    \right\Vert
  }
  \leq
  \frac{\varphi}{\widehat\sigma_k}
  \left(
    \sigma_1 \|\mxV_k^T \widehat\mxV_0\|
    +
    \|\widehat\mxW_{21}\|
  \right).
\end{equation}
\end{theorem}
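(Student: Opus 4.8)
The plan is to recognize the stated inequality as a Wedin-type perturbation bound for the Moore--Penrose pseudoinverse and to reduce its proof to a single operator-level estimate. First I would record the two norms $\|\mxK_\text{TSVD}^\dagger\| = 1/\sigma_k$ and $\|\widehat\mxK_\text{TSVD}^\dagger\| = 1/\widehat\sigma_k$. Both follow at once from the definitions (\ref{eq:TSVDSol}) and (\ref{eq:approximateTSVDoperator}): the singular-vector blocks $\mxU_k,\mxV_k,\widehat{\mxU}_k,\widehat{\mxV}_k$ have orthonormal columns, $\mxSigma_k=\mathrm{diag}(\sigma_1,\dots,\sigma_k)$ with $\sigma_k$ smallest, and $\widehat{\mxSigma}_t$ carries the sorted values $\widehat\sigma_1\ge\cdots\ge\widehat\sigma_k$. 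With these in hand, the factor $\varphi/\widehat\sigma_k$ on the right is exactly $\varphi\,\|\widehat\mxK_\text{TSVD}^\dagger\|$, so the whole claim follows once I establish (i) the pseudoinverse perturbation bound $\|\mxK_\text{TSVD}^\dagger-\widehat\mxK_\text{TSVD}^\dagger\|\le \varphi\,\|\mxK_\text{TSVD}^\dagger\|\,\|\widehat\mxK_\text{TSVD}^\dagger\|\,\|\mxK_\text{TSVD}-\widehat\mxK_\text{TSVD}\|$ and (ii) the discrepancy bound $\|\mxK_\text{TSVD}-\widehat\mxK_\text{TSVD}\|\le \sigma_1\|\mxV_k^T\widehat{\mxV}_0\|+\|\widehat{\mxW}_{21}\|$. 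Dividing (i) by $\|\mxK_\text{TSVD}^\dagger\|$ and inserting (ii) yields the result.

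The core of the argument is the discrepancy bound (ii), which I would obtain by writing both operators as right projections of $\mxK$. Since $\mxK\mxV_k=\mxU_k\mxSigma_k$, we have $\mxK_\text{TSVD}=\mxK\mxV_k\mxV_k^T$. Reading off the first block column of the factorization (\ref{eq:modifiedSVD}) gives $\mxK\widehat{\mxV}_k=\widehat{\mxU}_k\widehat{\mxSigma}_t+\widehat{\mxU}_0\widehat{\mxW}_{21}$, whence $\widehat\mxK_\text{TSVD}=\widehat{\mxU}_k\widehat{\mxSigma}_t\widehat{\mxV}_k^T=\mxK\widehat{\mxV}_k\widehat{\mxV}_k^T-\widehat{\mxU}_0\widehat{\mxW}_{21}\widehat{\mxV}_k^T$. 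Subtracting, $\mxK_\text{TSVD}-\widehat\mxK_\text{TSVD}=\mxK(\mxV_k\mxV_k^T-\widehat{\mxV}_k\widehat{\mxV}_k^T)+\widehat{\mxU}_0\widehat{\mxW}_{21}\widehat{\mxV}_k^T$. The triangle inequality together with $\|\mxK\|=\sigma_1$, the orthonormality $\|\widehat{\mxU}_0\|=\|\widehat{\mxV}_k\|=1$, and the standard identity $\|\mxV_k\mxV_k^T-\widehat{\mxV}_k\widehat{\mxV}_k^T\|=\|\mxV_k^T\widehat{\mxV}_0\|$ for orthogonal projectors onto equidimensional subspaces then delivers exactly (ii).

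For step (i) I would invoke Wedin's perturbation theorem for pseudoinverses: if $A$ and $B=A+E$ have equal rank, then $\|A^\dagger-B^\dagger\|\le\varphi\,\|A^\dagger\|\,\|B^\dagger\|\,\|E\|$ in the spectral norm, with $\varphi=(1+\sqrt5)/2$. Both $\mxK_\text{TSVD}$ and $\widehat\mxK_\text{TSVD}$ have rank exactly $k$ (assuming $\sigma_k,\widehat\sigma_k>0$), so the equal-rank hypothesis is met and the golden-ratio constant applies; I would use this theorem as a cited black box rather than reproving it.

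I expect the discrepancy bound (ii) to be the main obstacle, and the subtlety there is bookkeeping: one must use the right-multiplication representation $\mxK\mxV_k\mxV_k^T$ rather than the left one $\mxU_k\mxU_k^T\mxK$, since the left representation would instead produce $\|\mxU_k^T\widehat{\mxU}_0\|$ and $\|\widehat{\mxW}_{12}\|$ in place of the $\|\mxV_k^T\widehat{\mxV}_0\|$ and $\|\widehat{\mxW}_{21}\|$ that appear in the statement. I would also take care to justify the projector-difference identity via the principal angles (CS decomposition) between $\mathcal{R}(\mxV_k)$ and $\mathcal{R}(\widehat{\mxV}_k)$, and to confirm the equal-rank hypothesis underlying Wedin's bound.
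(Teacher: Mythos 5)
Your proposal is correct and takes essentially the same route as the paper's proof: Wedin's pseudoinverse perturbation theorem with constant $\varphi$, applied to the discrepancy bound $\|\mxK_\text{TSVD} - \widehat\mxK_\text{TSVD}\| \leq \sigma_1\|\mxV_k^T\widehat\mxV_0\| + \|\widehat\mxW_{21}\|$, which the paper also obtains from the representation $\mxK_\text{TSVD} = \mxK\mxV_k\mxV_k^T$, $\widehat\mxK_\text{TSVD} = \mxK\widehat\mxV_k\widehat\mxV_k^T - \widehat\mxU_0\widehat\mxW_{21}\widehat\mxV_k^T$ and the projector identity $\|\mxV_k\mxV_k^T - \widehat\mxV_k\widehat\mxV_k^T\| = \|\mxV_k^T\widehat\mxV_0\|$. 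The only minor difference is that you justify Wedin's theorem via the equal-rank hypothesis while the paper asserts $\widehat\mxK_\text{TSVD}$ is an acute perturbation of $\mxK_\text{TSVD}$; your version is, if anything, the more explicitly verifiable condition.
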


\begin{proof}
The proof starts with a perturbation result for pseudoinverses presented in \cite{wedin1973perturbation}: If ${\bm C}$ is an acute perturbation \cite{li2011moore} of ${\bm D}$, with ${\bm D} = {\bm C} +  \delta{\bm C}$, then 
\begin{equation}\label{eq:WedinPseudo}
\| {\bm C}^\dagger -  {\bm D}^\dagger\|\leq \varphi \| {\bm C}^\dagger \|\|{\bm D}^\dagger\|\| \delta{\bm C}\|.
\end{equation}
Since
\begin{eqnarray}\nonumber
 \|\mxK_\text{TSVD} - \widehat\mxK_\text{TSVD}\| 
 & = & 
  \| \mxK\mxV_k\mxV_k^T - 
    (
      \mxK\widehat\mxV_k\widehat\mxV_k^T
      -
      \widehat\mxU_0\widehat\mxW_{21}\widehat\mxV_k^T
    )
  \| \\
 & \leq &
  \|\mxK\| \|\mxV_k\mxV_k^T - \widehat\mxV_k \widehat\mxV_k^T\|
  +
  \|\widehat\mxW_{21}\|,
  \nonumber
\end{eqnarray}
and it is proved in \cite{golub2013matrix} that $\|\mxV_k\mxV_k^T - \widehat\mxV_k\widehat\mxV_k^T\| = \|\mxV_k^T\widehat\mxV_0\|$, we get
\begin{equation}\nonumber
  \|\mxK_\text{TSVD} - \widehat\mxK_\text{TSVD}\|
  \leq
  \|\mxK\| \|\mxV_k^T\widehat\mxV_0\| + \|\widehat\mxW_{21}\|.
\end{equation}
$\widehat\mxK_\text{TSVD}$ is an acute perturbation of $\mxK_\text{TSVD}$. So by (\ref{eq:WedinPseudo}),
\begin{equation}\label{eq:absErrorPseudo}
  \left\Vert
    \mxK_\text{TSVD}^\dagger - \widehat\mxK_\text{TSVD}^\dagger
  \right\Vert
  \leq
  \varphi
  \|\mxK_\text{TSVD}^\dagger\|
  \|\widehat\mxK_\text{TSVD}^\dagger \| 
  \left(
    \|\mxK\|\|\mxV_k^T \widehat\mxV_0\| + \|\widehat\mxW_{21}\|
  \right).
\end{equation}
Dividing both sides of the inequality by $\|\mxK_\text{TSVD}^\dagger\|$, we obtain
\begin{eqnarray}\nonumber
  \frac{
    \left\Vert
      \mxK_\text{TSVD}^\dagger - \widehat\mxK_\text{TSVD}^\dagger
    \right\Vert
  }{
    \left\Vert \mxK_\text{TSVD}^\dagger \right\Vert
  }
  & \leq &
  \varphi 
  \|\widehat\mxK_\text{TSVD}^\dagger \| 
  \left(
    \|\mxK\| \|\mxV_k^T \widehat\mxV_0\| + \|\widehat\mxW_{21}\|
  \right) \\
  & = & \nonumber
  \frac{\varphi}{\widehat\sigma_k}
  \left(
    \sigma_1 \|\mxV_k^T \widehat\mxV_0\| + \|\widehat\mxW_{21}\|
  \right).
\end{eqnarray}
\end{proof}

This theorem tells us that a good approximation of the pseudoinverse requires two conditions: a small distance between the ``noise'' subspaces, and a small ratio of the true largest singular value $\sigma_1$ to the approximated $k^{th}$ singular value $\widehat\sigma_k$. 

The bound for the approximated solution, computed using $\widehat{\mx{K}}^\dagger_\text{TSVD}$, shares these requirements. The following theorem bounds the relative error in the approximate TSVD solution $\bo{x}_\text{TSVD}$.

\begin{theorem}
Consider the TSVD filtered solution $\bfx_\text{TSVD}$ defined in (\ref{eq:TSVDSol}) and let the approximate TSVD solution $\widehat\bfx_\text{TSVD}$ be given by $\widehat\bfx_\text{TSVD} = \widehat\mxK_\text{TSVD}^\dagger \bfd$. Further, define the residual $\bfr = \bfd - \mxK\bfx_\text{TSVD}$. Then
\begin{equation}\label{theorem:SolutionQuality}
  \frac{
    \left\Vert \bo{x}_\text{TSVD} - \widehat{\bo{x}}_\text{TSVD} \right\Vert
  }{
    \left\Vert \bo{x}_\text{TSVD} \right\Vert
  } 
  \leq
  \frac{
    \varphi \sigma_1
  }{
    \sigma_k \widehat{\sigma}_k
    \sqrt{
      1 - \frac{\| \bo{r} \|^2}{\| \bo{d} \|^2}
    }
  }
  \left(
    \sigma_1 \|\mxV_k^T \widehat{\mx{V}}_0\| + \|\widehat{\mx{W}}_{21}\|
  \right).
\end{equation}
\end{theorem}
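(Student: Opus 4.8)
The plan is to bound the numerator $\|\bfx_\text{TSVD} - \widehat\bfx_\text{TSVD}\|$ and the denominator $\|\bfx_\text{TSVD}\|$ separately, then divide. For the numerator, I would write $\bfx_\text{TSVD} - \widehat\bfx_\text{TSVD} = (\mxK_\text{TSVD}^\dagger - \widehat\mxK_\text{TSVD}^\dagger)\bfd$ and apply submultiplicativity to obtain $\|\bfx_\text{TSVD} - \widehat\bfx_\text{TSVD}\| \le \|\mxK_\text{TSVD}^\dagger - \widehat\mxK_\text{TSVD}^\dagger\|\,\|\bfd\|$. The absolute-error estimate (\ref{eq:absErrorPseudo}) established in the proof of the previous theorem, combined with the identifications $\|\mxK_\text{TSVD}^\dagger\| = 1/\sigma_k$, $\|\widehat\mxK_\text{TSVD}^\dagger\| = 1/\widehat\sigma_k$, and $\|\mxK\| = \sigma_1$, then gives
\begin{equation*}
  \|\bfx_\text{TSVD} - \widehat\bfx_\text{TSVD}\|
  \le
  \frac{\varphi\,\|\bfd\|}{\sigma_k\widehat\sigma_k}
  \left( \sigma_1 \|\mxV_k^T\widehat\mxV_0\| + \|\widehat\mxW_{21}\| \right).
\end{equation*}

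For the denominator, the goal is a lower bound on $\|\bfx_\text{TSVD}\|$ expressed through the residual $\bfr$. Since $\bfx_\text{TSVD} = \mxV_k\mxSigma_k^{-1}\mxU_k^T\bfd$ and $\mxV_k$ has orthonormal columns, $\|\bfx_\text{TSVD}\| = \|\mxSigma_k^{-1}\mxU_k^T\bfd\| \ge \sigma_1^{-1}\|\mxU_k^T\bfd\|$, because the smallest diagonal entry of $\mxSigma_k^{-1}$ is $1/\sigma_1$. The remaining task is to rewrite $\|\mxU_k^T\bfd\|$ in terms of $\bfr$. Multiplying the full SVD against the truncated pseudoinverse, I would verify that $\mxK\bfx_\text{TSVD} = \mxU_k\mxU_k^T\bfd$, i.e. the orthogonal projection of $\bfd$ onto the signal subspace $\mathrm{range}(\mxU_k)$; the diagonal factors collapse since $\mxV^T\mxV_k = [\,\mxI_k\ \ 0\,]^T$ and $\mxSigma\,[\,\mxI_k\ \ 0\,]^T\mxSigma_k^{-1} = [\,\mxI_k\ \ 0\,]^T$. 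Consequently $\bfr = (\mxI - \mxU_k\mxU_k^T)\bfd$, and because $\mxU_k\mxU_k^T$ is an orthogonal projector the Pythagorean identity yields $\|\bfd\|^2 = \|\mxU_k^T\bfd\|^2 + \|\bfr\|^2$. Hence $\|\mxU_k^T\bfd\| = \|\bfd\|\sqrt{1 - \|\bfr\|^2/\|\bfd\|^2}$, and therefore $\|\bfx_\text{TSVD}\| \ge (\|\bfd\|/\sigma_1)\sqrt{1 - \|\bfr\|^2/\|\bfd\|^2}$.

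Dividing the numerator bound by this lower bound, the common factor $\|\bfd\|$ cancels and one power of $\sigma_1$ is promoted into the numerator, producing exactly the claimed estimate (\ref{theorem:SolutionQuality}).

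I expect the denominator step to be the main obstacle. One must correctly collapse the product $\mxK\mxK_\text{TSVD}^\dagger$ to the projector $\mxU_k\mxU_k^T$ (handling the partitioned $\mxSigma$ carefully so that the $\mxSigma_k$ and $\mxSigma_k^{-1}$ factors cancel), and then invoke orthogonality of the complementary projectors to turn the residual definition into the clean Pythagorean identity. The numerator, by contrast, is essentially a restatement of the computation already carried out for the preceding pseudoinverse theorem.
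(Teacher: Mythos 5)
Your proposal is correct, and its overall skeleton is the same as the paper's: bound the numerator by $\|\mxK_\text{TSVD}^\dagger - \widehat\mxK_\text{TSVD}^\dagger\|\,\|\bfd\|$ and invoke the inequality (\ref{eq:absErrorPseudo}) from the pseudoinverse theorem, then divide by a lower bound on $\|\bfx_\text{TSVD}\|$ expressed through the residual. Where you differ is in the denominator step, and your version is actually the more rigorous one. The paper asserts $\|\bfx_\text{TSVD}\| \geq \frac{\|\bfd\|}{\|\mxK\|}\sqrt{1-\|\bfr\|^2/\|\bfd\|^2}$ citing only ``the triangle inequality and submultiplicativity''; but the triangle inequality applied to $\mxK\bfx_\text{TSVD} = \bfd - \bfr$ would only yield the weaker factor $1 - \|\bfr\|/\|\bfd\|$, not the square root. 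Obtaining the stated $\sqrt{1-\|\bfr\|^2/\|\bfd\|^2}$ genuinely requires what you supply: the identity $\mxK\mxK_\text{TSVD}^\dagger = \mxU_k\mxU_k^T$, so that $\bfr = (\mxI - \mxU_k\mxU_k^T)\bfd$ is orthogonal to $\mxK\bfx_\text{TSVD}$ and the Pythagorean identity $\|\bfd\|^2 = \|\mxU_k^T\bfd\|^2 + \|\bfr\|^2$ applies. Your route to the $\sigma_1$ factor (bounding $\|\mxSigma_k^{-1}\mxU_k^T\bfd\| \geq \|\mxU_k^T\bfd\|/\sigma_1$ entrywise) is a harmless variant of the paper's use of $\|\mxK\bfx_\text{TSVD}\| \leq \|\mxK\|\,\|\bfx_\text{TSVD}\|$; both give the same constant. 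In short: same strategy, identical numerator, and your explicit projector-plus-Pythagoras argument fills in the one step the paper leaves under-justified.
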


\begin{proof}
From inequality (\ref{eq:absErrorPseudo}), we obtain
\begin{eqnarray}
  \left\Vert \bfx_\text{TSVD} - \widehat\bfx_\text{TSVD} \right\Vert 
  & \leq &
  \nonumber
  \left\Vert
    \mxK_\text{TSVD}^\dagger - \widehat\mxK_\text{TSVD}^\dagger
  \right\Vert
  \left\Vert \bfd \right\Vert \\
  & \leq & \nonumber
  \varphi
  \|\mxK_\text{TSVD}^\dagger\|
  \|\widehat\mxK_\text{TSVD}^\dagger \| 
  \left(
    \|\mxK\|\|\mxV_k^T \widehat\mxV_0\| + \|\widehat\mxW_{21}\|
  \right)
  \|\bfd\|.
\end{eqnarray}
By the triangle inequality and submultiplicativity of induced norms,
\begin{equation}\nonumber
  \|\bfx_\text{TSVD}\|
  \geq
  \frac{\|\bfd\|}{\|\mxK\|}
  \sqrt{1-\frac{\| \bfr \|^2}{\| \bfd \|^2}},
\end{equation}
it follows that 
\begin{eqnarray}
  \frac{
    \left\Vert \bfx_\text{TSVD} - \widehat\bfx_\text{TSVD} \right\Vert
  }{
    \left\Vert \bfx_\text{TSVD}\right\Vert
  }
  & \leq & \nonumber
  \frac{\varphi}{\sqrt{1-\frac{\| \bfr \|^2}{\| \bfd \|^2}}}
  \|\mxK_\text{TSVD}^\dagger\|
  \|\widehat\mxK_\text{TSVD}^\dagger \| 
  \|\mxK\|
  \left(
    \|\mxK\| \|\mxV_k^T \widehat\mxV_0\| + \|\widehat\mxW_{21}\|
  \right) \\
  & = & \nonumber
  \frac{
    \varphi \sigma_1
  }{
    \sigma_k \widehat{\sigma}_k \sqrt{1-\frac{\| \bfr \|^2}{\| \bfd \|^2}}
  }
  \left(
    \sigma_1 \|\mxV_k^T \widehat\mxV_0\| + \|\widehat\mxW_{21}\|
  \right).
\end{eqnarray}
\end{proof}

There are two limitations of note for the four bounds provided here. All of the bounds are derived in exact arithmetic, but computations are performed in floating point arithmetic. Therefore, the true error may exceed the bound due to numerical errors; an example of this behavior is seen in Section~\ref{ssec:bounds}. Additionally, if the approximated singular values have high accuracy and the ratio $\frac{\sigma_1}{\sigma_k}$ is large, then the pseudoinverse and true solution bounds are extremely pessimistic. We reiterate that those two bounds are most useful for matrices with highly clustered singular values.

\section{Experimental Results}\label{sec:experiments}
In this section, we show the strengths and limitations of our algorithm in various experimental settings. We show that, in practice, this method is more accurate but slower than the baseline (Section~\ref{ssec:performance}, Section~\ref{ssec:precondition}) and less accurate but faster than standard TSVD software (Section~\ref{ssec:comparison_tsvd}). We also show the strengths and limitations of our derived bounds (Section~\ref{ssec:bounds}). All experiments are written and executed in MATLAB.

\subsection{Comparison to Standard Truncated SVD Software}\label{ssec:comparison_tsvd}
Experimental evidence suggests that our algorithm is faster, although less accurate, than standard software for computing a truncated singular value decomposition of a matrix. Specifically, we compare our proposed algorithm with the PROPACK \cite{larsen2004propack} truncated SVD routine, \texttt{lansvd}, which is based on Lanczos (Golub-Kahan) bidiagonalization. The \texttt{lansvd} routine is open source but the randomized method \cite{halko2011finding} routines are not; we compare only to the Lanczos routine to ensure the presented results are for properly optimized implementations of the algorithms.

The \texttt{lansvd} routine computes the top $k$ singular values and vectors of a matrix. It does not require the explicit formation of the input matrix, but instead allows the user to pass function handles that compute the action of the matrix on a vector. These functions are necessary when memory constraints prohibit explicit formation of the matrix, and they can also be faster than full, explicit multiplications for certain sparse matrix structures. Because our problem has such an exploitable structure, our experimental comparison uses the function call form of \texttt{lansvd}.

The main work in the Lanczos bidiagonalization routine is construction of a sequence of orthonormal vectors that bidiagonalize a matrix. That is, for a given matrix $\mxK \in \mathds{R}^{N\times N}$, the Lanczos bidiagonalization procedure iteratively constructs the columns of matrices $\mxU \in \mathds{R}^{N \times N}$ and $\mxV \in \mathds{R}^{N \times N+1}$ such that $\mxU^T \mxK \mxV$ is bidiagonal \cite{larsen2004propack}. Because of imprecision incurred through division by small numbers, the columns of $\mxU$ and $\mxV$ do not remain orthonormal throughout the iterations \cite{golub2013matrix}. Periodic reorthonormalization prevents these instabilities from accumulating excessive error. This requires extra computation but considerably improves the accuracy of the method. The \texttt{lansvd} routine allows users to adjust the level of reorthogonalization, as well as an overall stopping criteria, through optional parameters.

A timing test demonstrated that our algorithm is faster than the \texttt{lansvd} routine, even with its reorthogonalization effectively turned off. Both algorithms, as well as a full SVD computation with MATLAB's \texttt{svd} function, were run on a 4096$\times$4096 blur operator matrix originating from a 64$\times$64 speckle PSF from astronomical imaging; see the test problem {\tt PRblurspeckle} in \cite{gazzola2017irtools}. We tested the Kronecker product decomposition algorithm on a variety of choices of Kronecker rank, but report here the results for the full Kronecker rank of $r = 64$, which were the slowest runs of our algorithm. Additionally, we tested for multiple accuracies of Lanczos bidiagonalization by tuning the parameters for the method. The choices yielding the fastest (with reorthogonalization turned off and error bound checking effectively turned off) and most accurate (using a default of high accuracy and low error tolerance) runs are reported. All times shown are the average of 5 runs of each algorithm for each singular value rank tested. Our algorithm ran quicker than the \texttt{lansvd} routine for all truncation sizes tested, as shown in \autoref{fig:lanczos_timings}.

\begin{figure}[H]
\centering
\includegraphics[scale=.35]{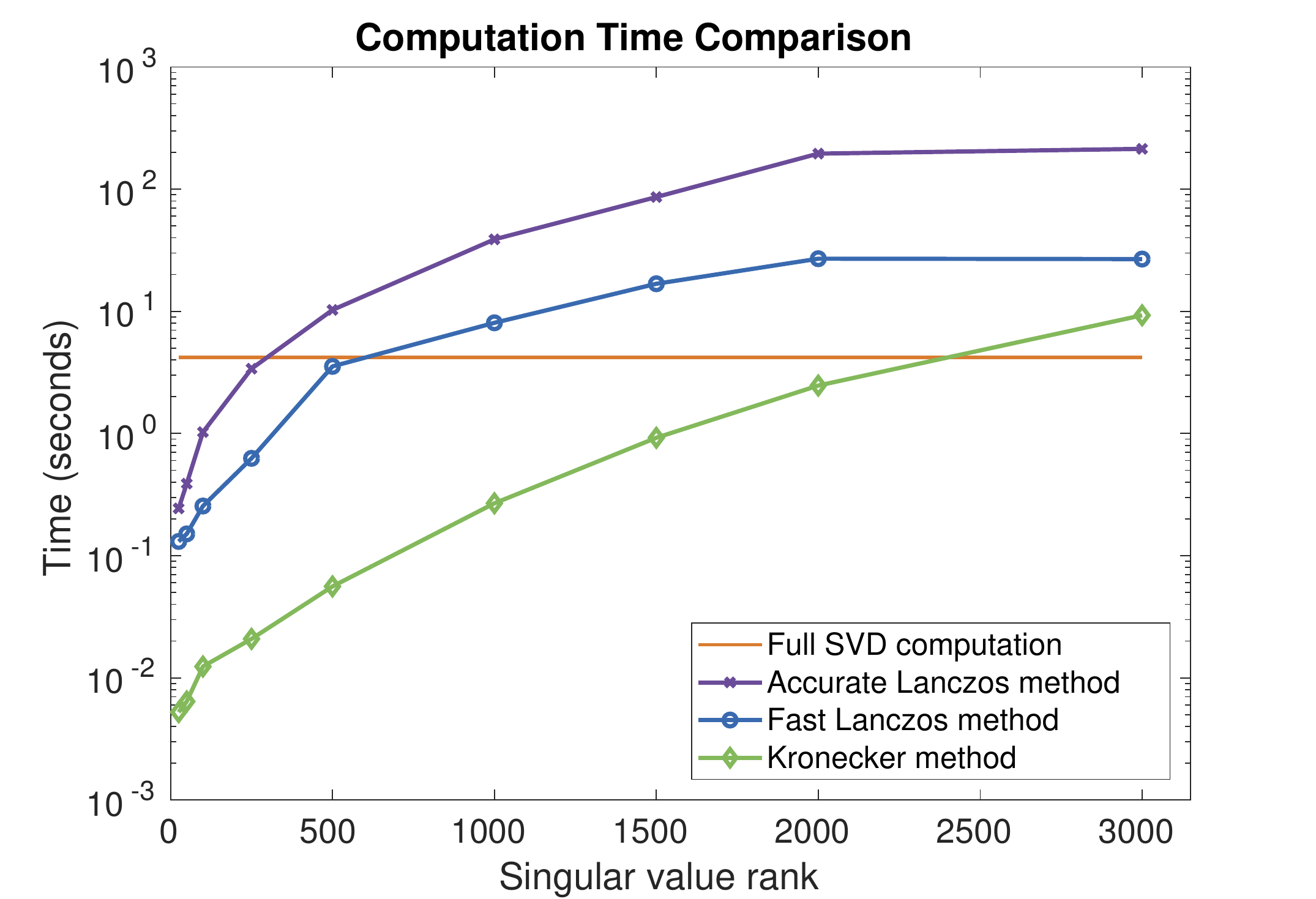}
\caption[Comparison of Lanczos and Kronecker timings]{A comparison of Lanczos bidiagonalization and Kronecker product decomposition SVD methods. The Kronecker method ran faster than both the accurate and fast Lanczos methods. Time for computing a full SVD of the matrix is included.}
\label{fig:lanczos_timings}
\end{figure}

Both methods eventually became slower than computing a full SVD. This is unsurprising. The matrix could fit in memory for this example, and both the \texttt{lansvd} and Kronecker-based methods require overhead that is needlessly costly for very small matrices. The strengths of these methods are their ability to operate when the full matrix cannot fit in memory. Using them for small matrices is inadvisable.

When run to high accuracy, the PROPACK \texttt{lansvd} routine was much more accurate than our approximated TSVD for this test problem. For each truncation size tested, the relative error in singular values compared to MATLAB's SVD was near zero ($10^{-15}$ to $10^{-16}$, near machine epsilon) for the PROPACK routine on the most significant singular values, while our approximation method had significant relative error on the order of $10^{-6}$. This is expected: our routine is intended as a quick approximation. With reorthogonalization effectively turned off, the error of Lanczos bidiagonalization became extreme. As the number of singular values and vectors computed increased, the relative error reached the order of $10^{10}$.

For applications in which accuracy can be sacrificed for speed, our method outperforms the PROPACK \texttt{lansvd} method.

\subsection{Performance} \label{ssec:performance}
Our proposed TSVD approximation successfully solves image deconvolution problems for which a direct solution is feasible. We compare the performance of our method with the baseline method. The results depend on the decay of singular values in the rearrangement matrix $\tilde{\mx{K}}$. For matrices with sharp decay in singular values, the baseline and proposed reordering methods perform similarly and comparably to a ground truth TSVD solution. When instead the singular value decay is slow, the proposed method strongly outperforms the baseline. We demonstrate performance for image deconvolution problems with each category of matrix.

We begin showing the performance on a problem for which all methods perform similarly. To enable comparison with ground truth, we used a $64 \times 64$ test image of a satellite, which can be obtained from \cite{nagy2004iterative}. The PSF, obtained from \texttt{AtmosphericBlur30.mat} in the same package and shown in \autoref{fig:speckle_PSF}, was of the same size. We used zero boundary conditions to construct the blur operator. The Kronecker summation was not truncated ($r = R$), but the matrix rank was truncated to 600 singular values. $2\%$ Gaussian noise was added to the blurred image. Because truncation in the SVD alone did not produce a sufficiently smooth solution, we additionally used Tikhonov regularization \cite{tikhonov1977solutions} with regularization parameter $\lambda = 0.001$.

\begin{figure}[H]
\includegraphics[scale=.35]{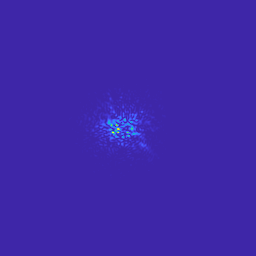}
\caption{The PSF used the first example in \sslink{performance}.}
\label{fig:speckle_PSF}
\end{figure}

The true image, blurred noisy image, and restored image are shown in \autoref{fig:SatelliteResults}. The restoration using MATLAB's \texttt{svds} method and the baseline method also shown for comparison.

\begin{figure}[H]
\centering
\begin{tabular}{c c}
True & Blurred\\
\includegraphics[scale=1.3]{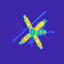} &
\includegraphics[scale=1.3]{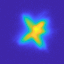}\\
&
\end{tabular}

\begin{tabular}{c c c}
MATLAB \texttt{svds} & Proposed TSVD & Baseline TSVD\\
\includegraphics[scale=1.3]{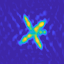} &
\includegraphics[scale=1.3]{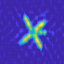} &
\includegraphics[scale=1.3]{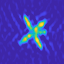}
\end{tabular}
\caption[Restoration Results]{A comparison of the restoration using our proposed method to baseline method and the true TSVD computed with \texttt{svds}. All methods produce visually similar results.}
\label{fig:SatelliteResults}
\end{figure}

How good of an approximation are our computed singular values? Quite good, especially for the largest singular values. The relative error in the singular values are shown in \autoref{fig:RelErrorComparison} for our method and the baseline.

\begin{figure}[H]
\centering
\includegraphics[scale=.5]{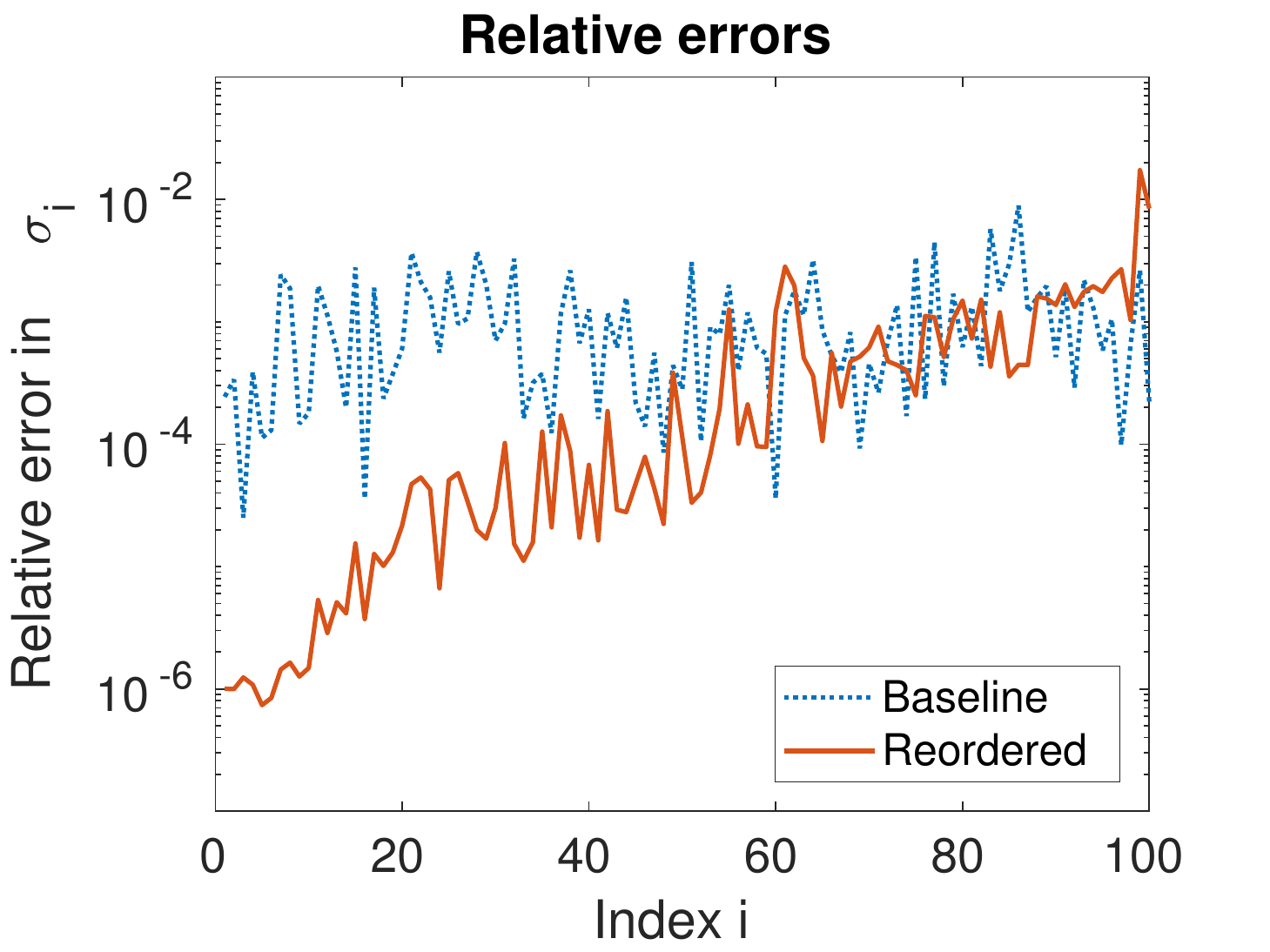}
\caption{Relative errors of the computed singular values. The proposed reordering method has higher accuracy for the largest singular values than the baseline method.}
\label{fig:RelErrorComparison}
\end{figure}

Our singular value approximation naturally has higher relative (and absolute) error closer to the truncation boundary. Because the method is fast, an option for practical use is to choose a larger than necessary truncation index $k$, compute the approximate TSVD, and truncate down further to remove the least accurate values. In general, the error is lower than the baseline method for the largest singular values.

In addition to the previous test, we tested on a larger problem with a PSF representing motion blur, shown in \autoref{fig:motion_PSF}. The singular values of the corresponding blur operator $\tilde{\mx{K}}$ have a much slower decay for this problem than the previous example. We tested using a $256 \times 256$ version of the satellite test image with a severe motion blur PSF from \cite{gazzola2017irtools}. Again we used zero boundary conditions, and the blurred image was contaminated with $1\%$ Gaussian noise. This problem is less ill-posed than the previous example, so no Tikhonov regularization was added.

\begin{figure}[H]
\includegraphics[scale=.4]{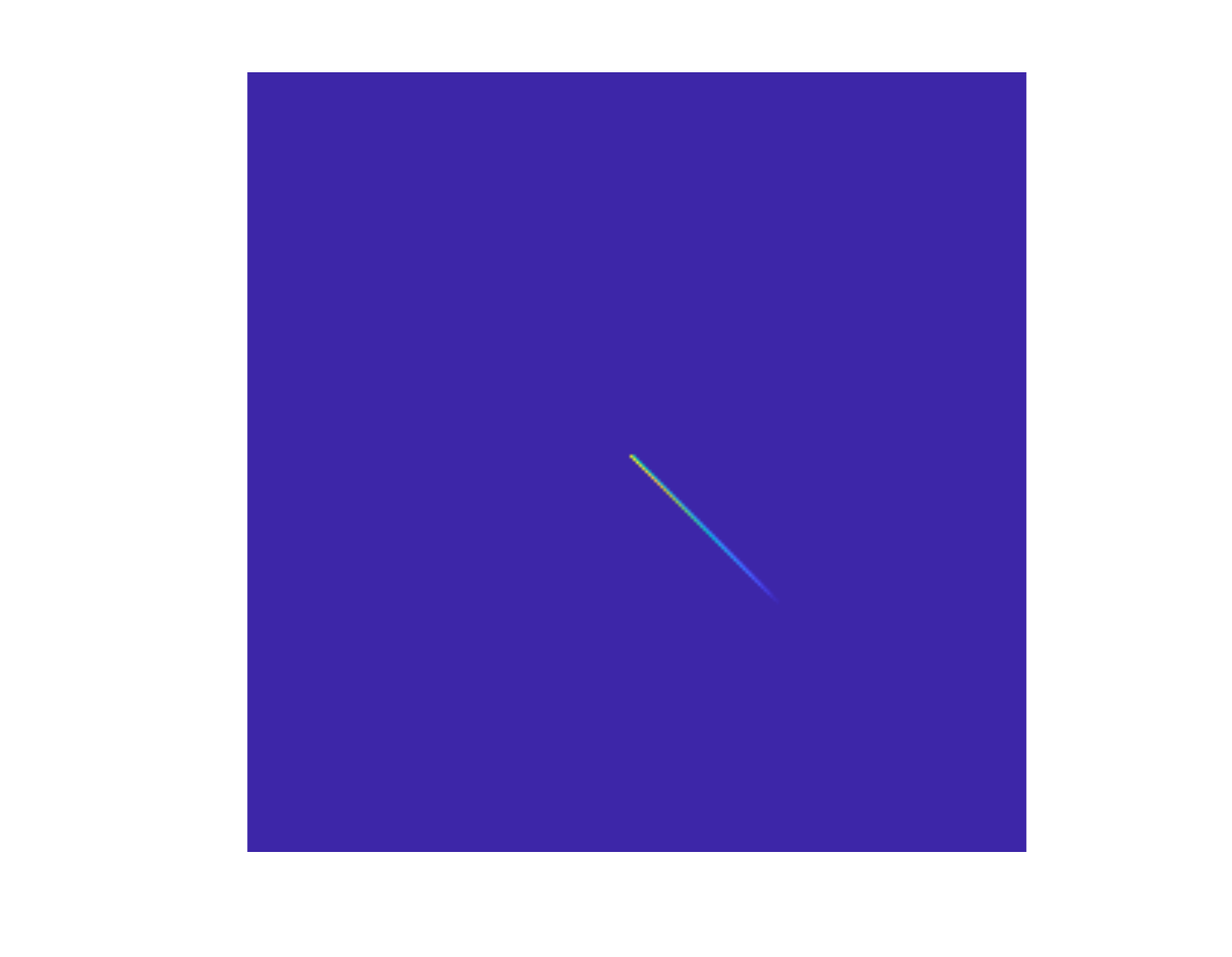}
\caption{The motion blur PSF used in the second example of  \sslink{performance}.}
\label{fig:motion_PSF}
\end{figure}

\begin{figure}[H]
\centering
\begin{tabular}{c c}
True & Blurred\\
\includegraphics[scale=.4]{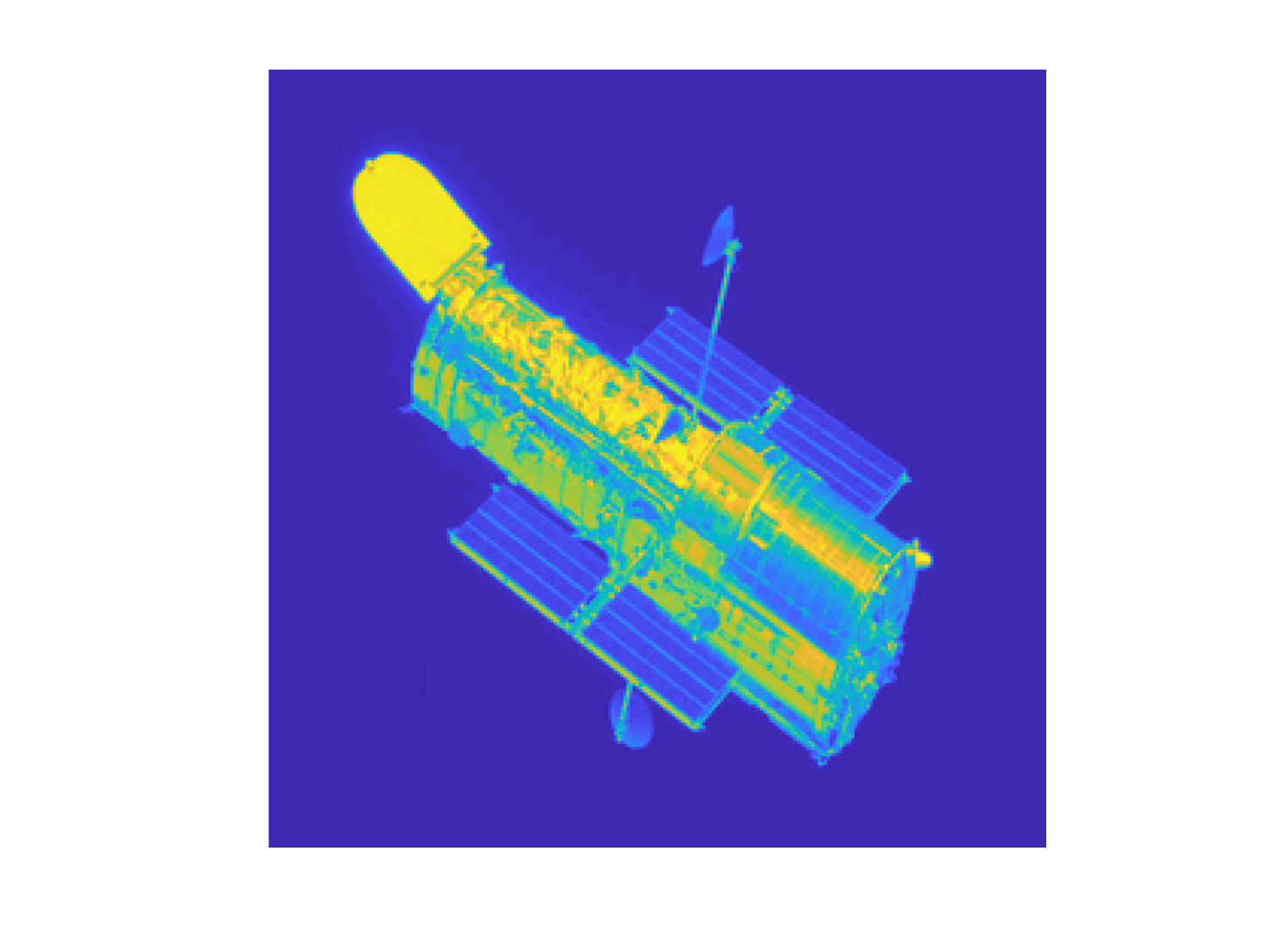} &
\includegraphics[scale=.4]{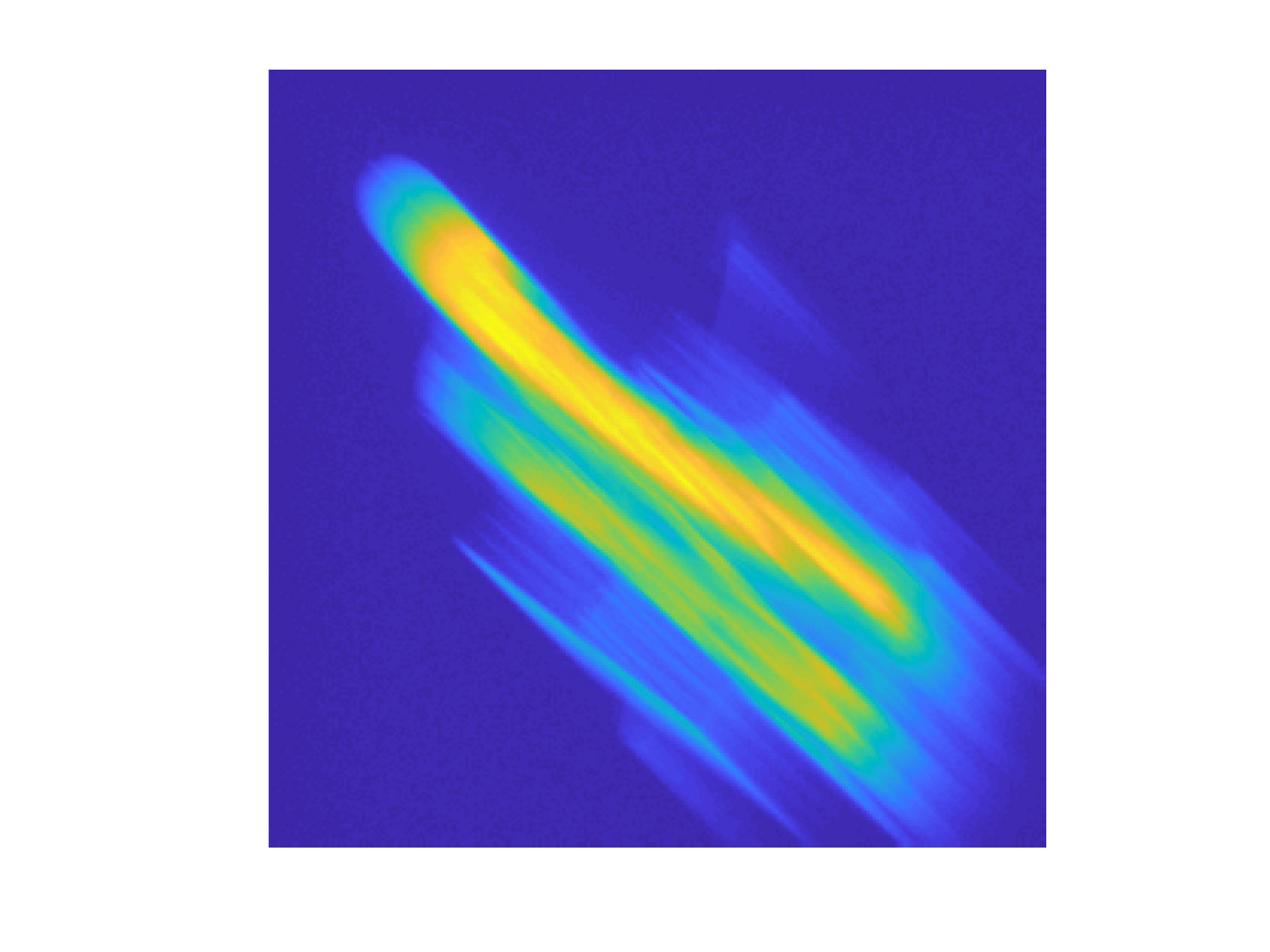}\\
&\\
Proposed TSVD & Baseline TSVD\\
\includegraphics[scale=.4]{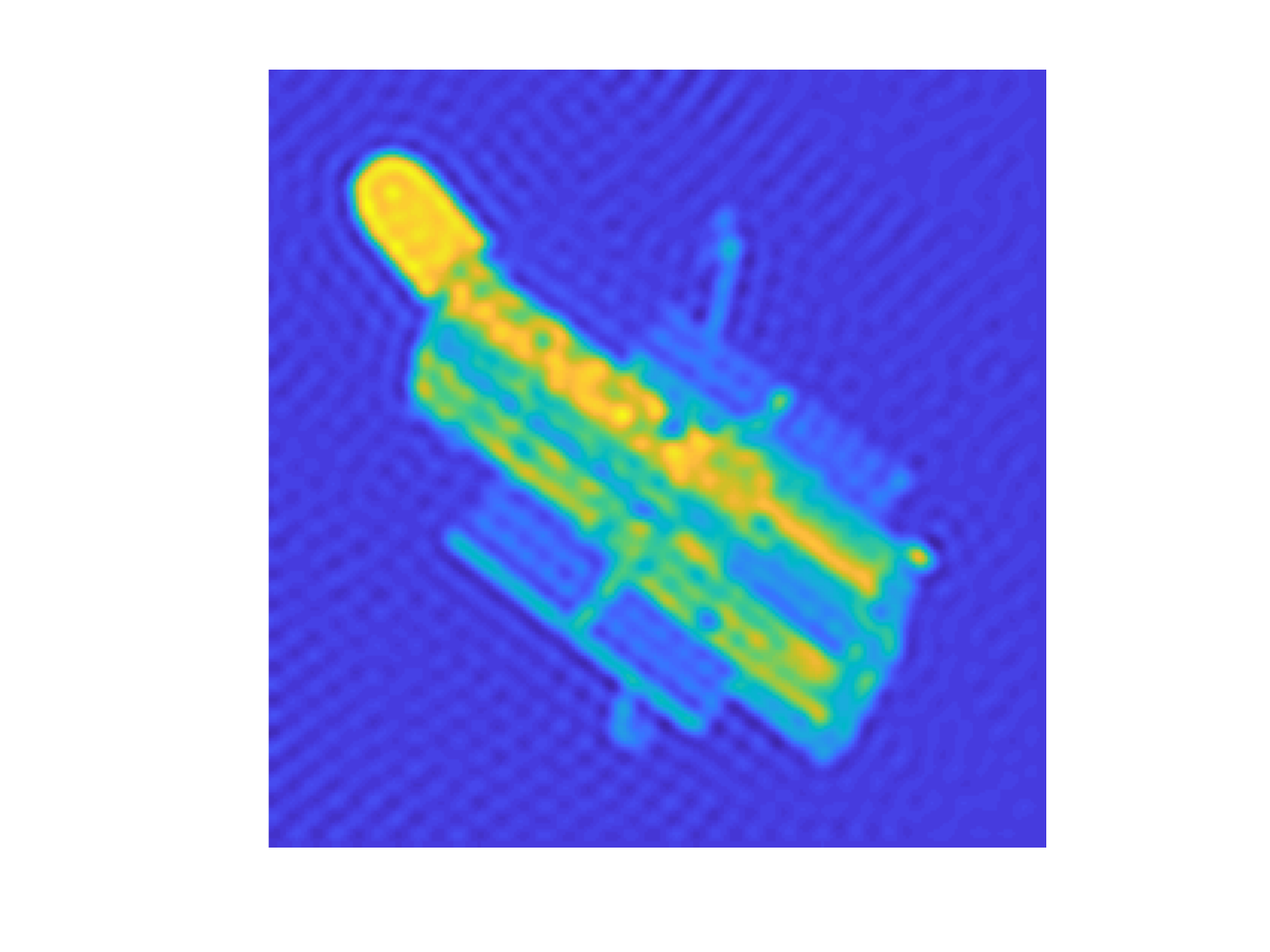} &
\includegraphics[scale=.4]{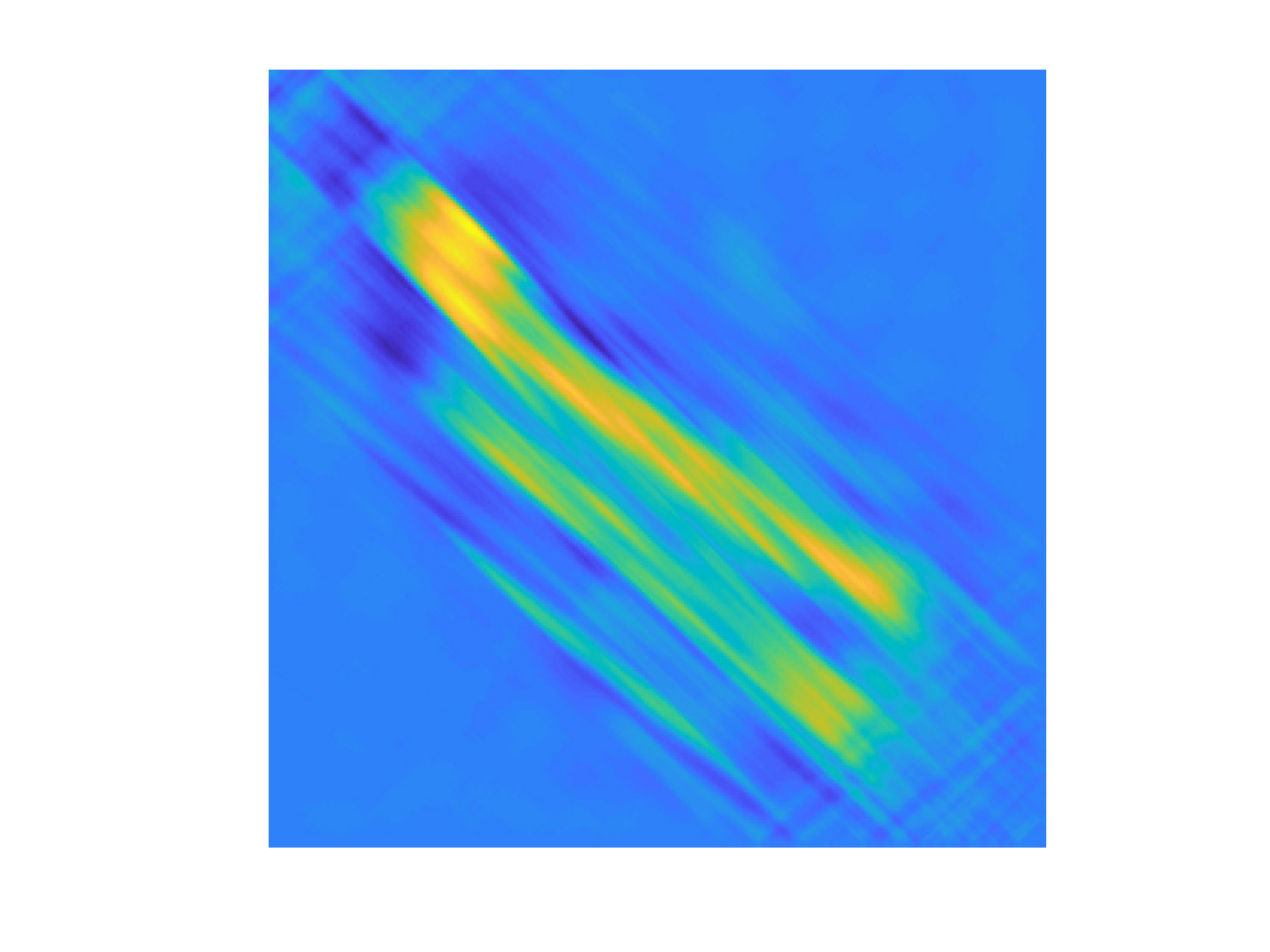}
\end{tabular}
\caption[Restoration Results]{A comparison of the restoration using our proposed method to baseline method for the motion blur example. In this case, our method produces visually superior results.}
\label{fig:motion_results}
\end{figure}

Computing ground truth singular values and vectors was infeasible due to the problem size (here, $\mx{K}$ is size $65563 \times 65563$). However, as is shown in \autoref{fig:motion_results}, the baseline method clearly gives a worse restoration than the reordering method, as expected: when the decay of singular values in $\tilde{\mx{K}}$ is slow, the baseline method generally discards too much data for an accurate approximation. Our proposed reordering method preserves much more information in comparison. For problems like this where the PSF (or, equivalently, $\tilde{\mx{K}}$) has slowly decaying singular values, our method produces results superior to the baseline method.

\subsection{Bounds} \label{ssec:bounds}
In Section~\ref{sec:method}, we provided bounds on the distances between the values computed by our approximate TSVD and the exact TSVD for their signal and noise subspaces, the pseudoinverse, and $\bo{x}_\text{TSVD}$. Here, we show the performance of these bounds on an actual test problem and discuss their limitations.

The problem setup for this section is identical to the experiments in Section~\ref{ssec:performance}, except that we use a different PSF originating from atmopsheric blur (see \texttt{AtmosphericBlur50.mat} from  \cite{nagy2004iterative}). For each bound, we demonstrate the effect of computing to a truncation index $k = 100$ then truncating down to a smaller effective rank. The bounds for the signal subspace are shown in \autoref{fig:subspace_bound}.

\begin{figure}[H]
\centering
\includegraphics[scale=.45]{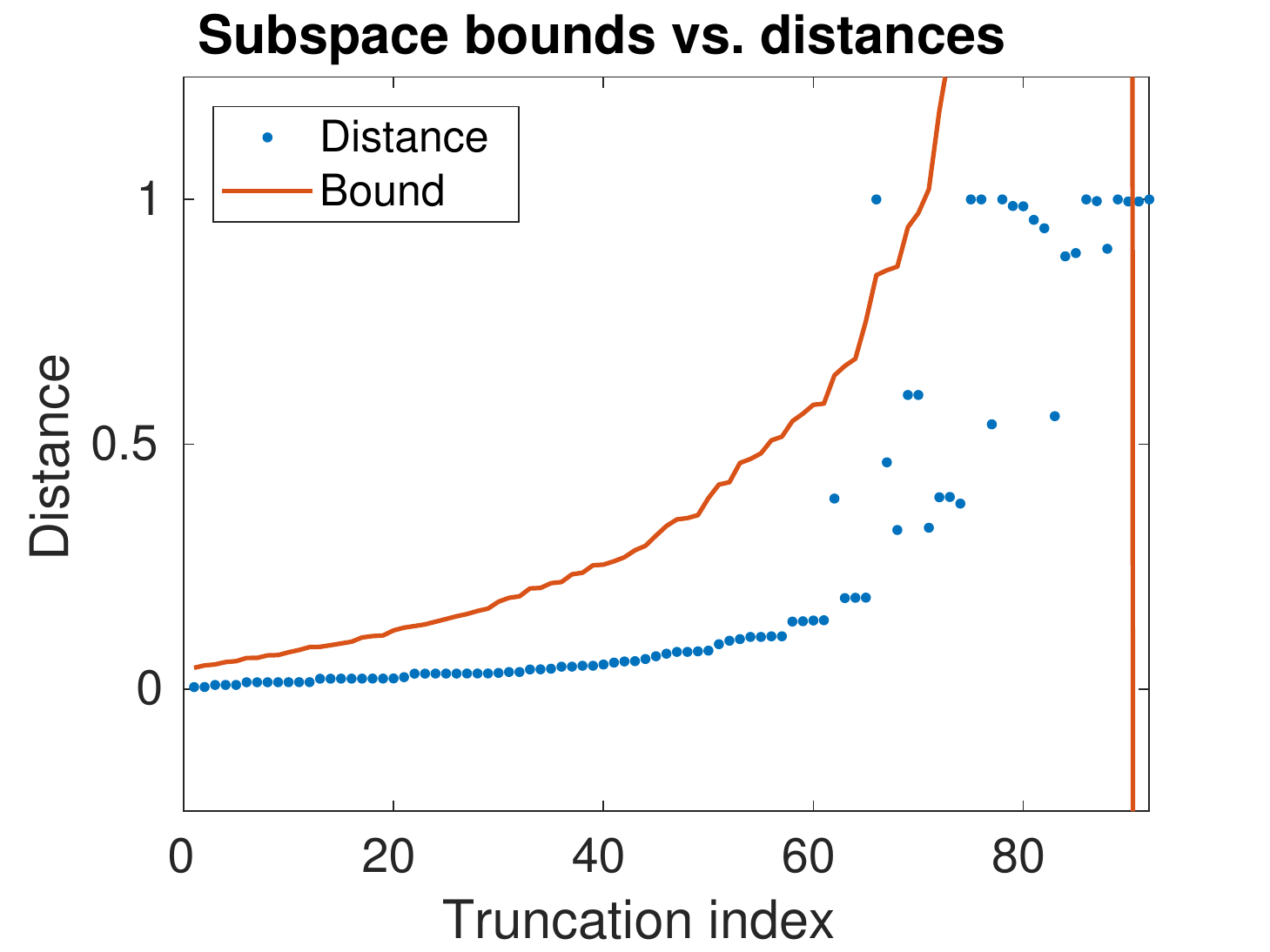}
\caption[Subspace bounds]{The distance between true and computed signal subspaces versus bound on the same quantity. In this example, the bound is useful until a truncation index of around 70, but then the limitations of the bound render it unusable. The noise subspace bounds are visually indistinguishable for this test problem.}
\label{fig:subspace_bound}
\end{figure}

For this test, numerical errors cause one point to have a higher actual distance than the bound. As mentioned previously, this is because the bound is derived in exact arithmetic, but the actual distances are calculated in floating point precision. Further, the distances are guaranteed to be in the range $[0, 1]$, but the theoretical bound is not. The bound first increases and exceeds this range as, referring to the notation in (\ref{eq:SignalSubspaceBound}), $k$ increases and $\sigma_k$ therefore decreases, with $\sigma_k$ approaching $\|\widehat{\mx{\Sigma}}_0 + \mx{W}_{22}\|$. However, eventually $\sigma_k$ becomes smaller than  $\|\widehat{\mx{\Sigma}}_0 + \mx{W}_{22}\|$ due to error in the approximated singular values, and the bound becomes negative, indicating that the distance is at least $0$ (which is known a priori). The plot is truncated at this point.

For the pseudoinverse and solution bounds we use the true subspace distances, rather than the derived bounds (\ref{eq:SignalSubspaceBound}) or (\ref{eq:NoiseSubspaceBound}). This enables the bound to be usable past the point at which the bound in \autoref{fig:subspace_bound} becomes negative. Despite this, the bounds are extremely loose this example. See \autoref{fig:pseudo_and_x_bounds} for an illustration of this.

\begin{figure}[H]
\begin{center}
\begin{tabular}{cc}
\includegraphics[scale=.45]{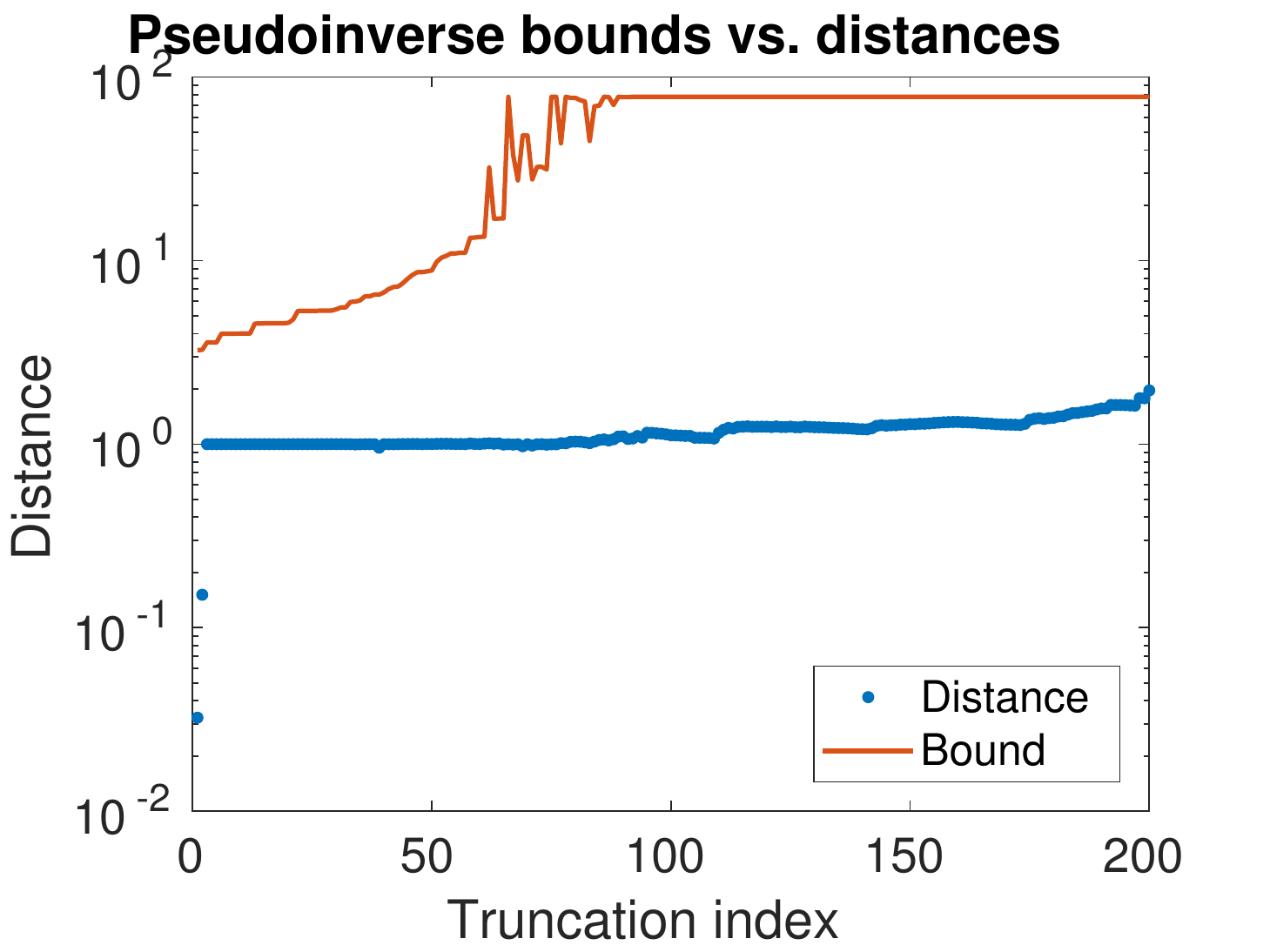} &
\includegraphics[scale=.45]{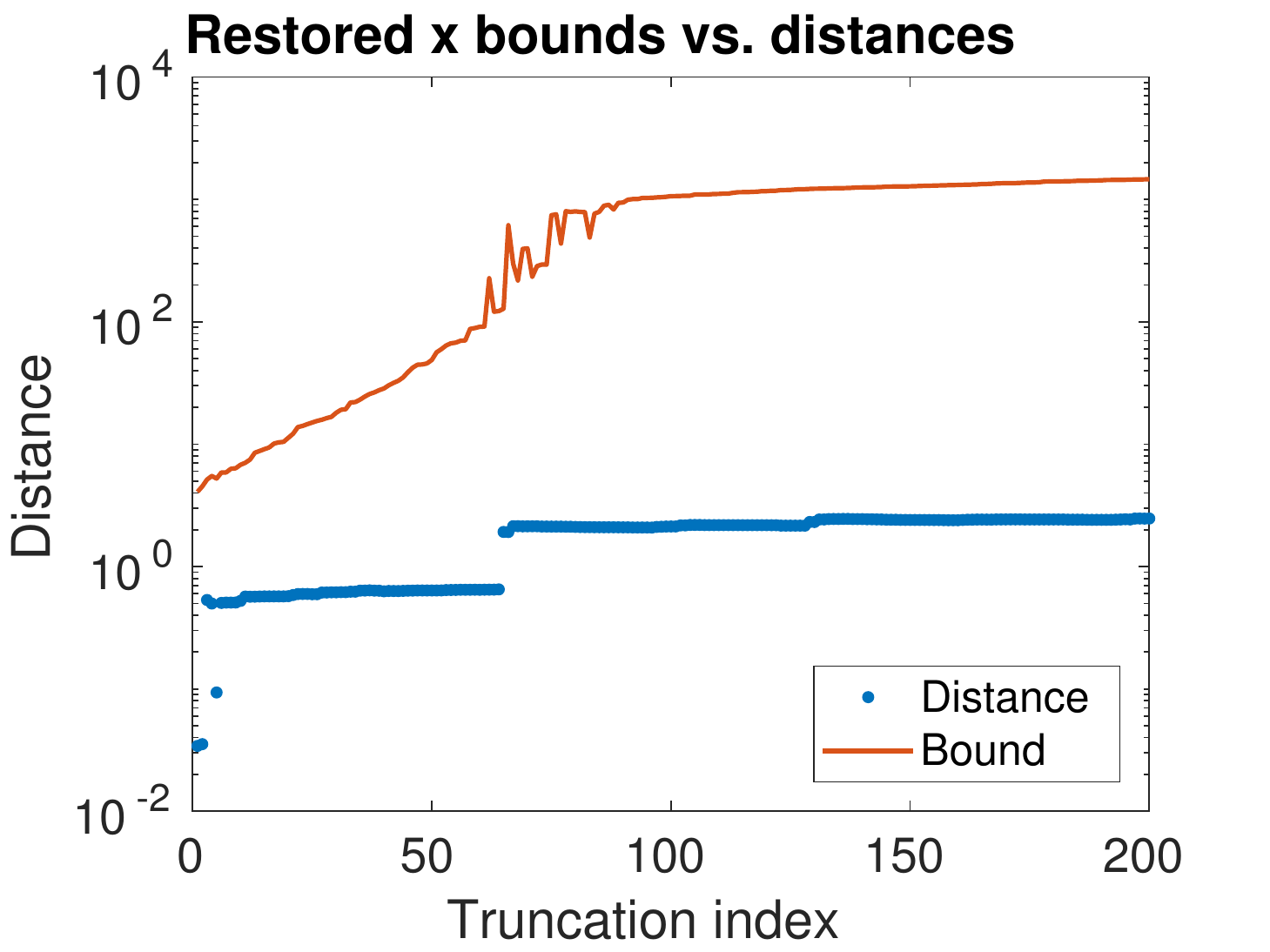}
\end{tabular}
\end{center}
  {\caption{This plot on the left shows bounds for the pseudo inverse, and the plot on the right shows bounds for the TSVD solution.}\label{fig:pseudo_and_x_bounds}}
\end{figure}

These bounds are too loose to be useful. This is because the linear problem is ill-posed, with fairly quickly decaying singular values of the matrix $\mx{K}$. Therefore the ratios $\frac{\sigma_1}{\sigma_k}$ and $\frac{\sigma_1}{\widehat{\sigma}_k}$ are large, loosening the bounds. We caution against using these bounds without a priori knowledge of the singular value clustering of the matrix $\mx{K}$.

\subsection{Preconditioning} \label{ssec:precondition}
In this subsection, we show that both our proposed method and the baseline method are effective preconditioners, saving wall-clock time compared to unpreconditioned systems. Further, we discuss how to choose between the baseline and proposed methods for preconditioning.

We tested the baseline method and our reordering method as preconditioners for a preconditioned conjugate gradient least squares (PCGLS) formulation of the image deconvolution problem and compare the time taken for each method to converge. The PSF originates from an astronomical imaging problem (\texttt{satellite.mat} in \cite{nagy2004iterative}) with image size 256$\times$256; the operator $\mxK$ cannot fit in memory. We ran the PCGLS algorithm no preconditioner, with the baseline method as a preconditioner, and finally with a preconditioner constructed from our new reordering method, with rank $k = 1500$. We used Tikhonov regularization with regularization parameter $\lambda = 0.001$, which prevented the baseline method from incurring undue noise. Each time reported is the average of 20 trials. The times are shown in \autoref{fig:pcg_timings} below.

\begin{figure}[H]
\centering
\includegraphics[scale=.5]{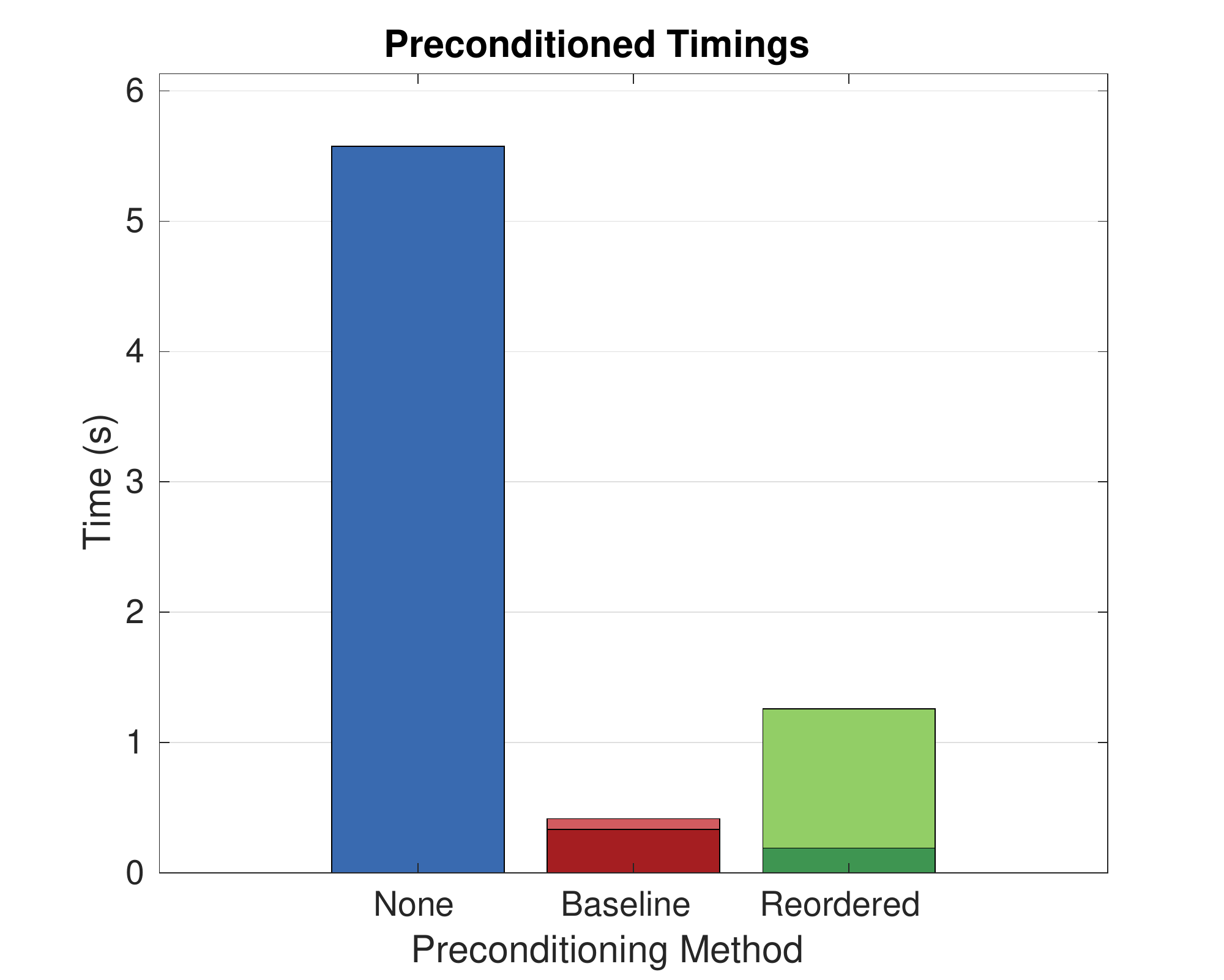}
\caption{Comparative times for preconditioned solutions of an image restoration problem. Solving using conjugate gradient least squares method without a preconditioner takes the most time of all methods tested, and the baseline was fastest. For both preconditioners, the time taken to compute the preconditioner is shown in a light color at the top of the bar, and the time taken to run the PCGLS method with that preconditioner is shown in a darker color. The reordered method was run with rank $k = 1500$.}
\label{fig:pcg_timings}
\end{figure}

For overall speed, the baseline method is by far the best. Total computation time averaged 0.42 seconds seconds compared to the unpreconditioned 5.6 seconds. However, the computation time is heavily skewed; the PCGLS algorithm takes more iterations with the less accurate baseline preconditioner than it takes with the more accurate reordering method preconditioner (16 iterations versus 7). The result is that the time taken for PCGLS iterations is less for the reordering method (.19 seconds) than for the baseline method (.34 seconds). For this problem, if there were multiple right-hand sides $\bo{b}$, the reordering method would eventually produce time savings over the baseline.

Both our currently-proposed method and the baseline method are effective preconditioners. For problems with a single right-hand side, the baseline method is an appropriate choice, but with many right-hand sides the reordering method is preferred.

\section{Related Work}\label{sec:related_works}
In this section we detail the context of this work among related works. First, we detail the prior work based on Kronecker product decompositions in Section~\ref{ssec:related_kron}. For the remaining subsections, we discuss related works that use alternative approaches to perform truncated SVD approximations. Although these final related works do not exploit Kronecker structure, it is possible for them to be reformulated to do so. Efficiently implementing the algorithms to use Kronecker structure is non-trivial, but could provide a direction for future research.

\subsection{Kronecker Decompositions}\label{ssec:related_kron}
The original idea of performing efficient Kronecker product decompositions on blur operator matrices came from Nagy in 1996 \cite{nagy1996decomposition}. This became the basis for a method to compute an approximated SVD by Kamm and Nagy \cite{kamm1998kronecker}. The same authors later provided theoretical justification for the method \cite{kamm2000optimal}. The original algorithm applied only to block Toeplitz matrices with Toeplitz blocks corresponding to zero or periodic boundary conditions and only to 2D problems. Additionally, the original algorithm uses a simple computation for the singular value matrix $\mxSigma$ as detailed in Section~\ref{sec:method}, although \cite{kamm2000optimal} uses an additional diagonal weighting matrix in its construction.

Since the publication by Kamm and Nagy \cite{kamm2000optimal}, the algorithm has been extended in three general ways. First, the original 2D algorithm was modified to enable use on 3D problems \cite{nagy2006kronecker}. Second, Kilmer and Nagy extended the work from banded matrices to dense matrices \cite{kilmer2007kronecker}. And third, the choices of boundary conditions have expanded to reflexive \cite{nagy2003kronecker}, anti-reflective \cite{perrone2006kronecker}, and whole-sample symmetric/reflective boundary conditions \cite{lv2012kronecker}, among others. All prior work uses the simple $\mxSigma$ computations proposed in \cite{kamm1998kronecker} and \cite{kamm2000optimal}.

\subsection{Golub-Kahan-Lanczos Bidiagonalization Methods}
There are various alternatives to Kronecker-based methods for computing approximate truncated singular value decompositions of matrices. One alternative is based on Lanczos bidiagonalization. These methods are popular because of their high accuracy and flexibility.

Lanczos bidiagonalization transforms a general matrix into a lower bidiagonal matrix, which serves as a foundation for computing the singular value decomposition. Recall that, because the process is unstable, restarting and reorthogonalization are required to maintain stability \cite{golub2013matrix}. Each Lanczos bidiagonalization step requires two matrix-vector multiplications: one with the matrix $\mxK$, and one with $\mxK^T$.  These multiplications and reorthogonalization account for the bulk of the computation time of the algorithm.

PROPACK is a popular implementation of Lanczos bidiagonalization routines for computing truncated SVDs \cite{larsen2004propack}. The routine \texttt{lansvd} approximately computes an SVD with user-specified options. These options include a truncation index for computing truncated SVDs up to a specified number of terms, an overall convergence tolerance for the method, and tolerances for the levels of orthogonalization used in restarts and after each step.

By default, \texttt{lansvd} has higher accuracy but takes longer than our Kronecker product-based SVD approximation. This accuracy can be relaxed to expedite computation, but the method nonetheless remains slower than our proposed algorithm because \texttt{lansvd} does not exploit Kronecker structure in its computations. See Section \ref{ssec:comparison_tsvd} for a detailed comparison.

\subsection{Randomized Methods}
A second alternative set of methods are randomized methods. Halko, Martinsson, and Tropp proposed a generalized random algorithm for computing various factorizations of matrices, including the truncated SVD \cite{halko2011finding}. One version of this algorithm applies to matrices for which computing a matrix-vector product is fast; this is the case described in Section~\ref{sec:experiments}, as the PSF can be applied with chosen boundary conditions without explicitly forming the full blur operator $A$.

The time complexity of the randomized algorithm for computing a truncated SVD approximation with an $n \times n$ PSF is $O(n^4k + n^2k^2)$, while the complexity of our method is $O(n^3r + k^2r + k^3)$ (see formula 6.2 of \cite{halko2011finding} and Section~\ref{sec:notation} to convert notation). In our experience, $k > n$ to be effective, and $r$ may be chosen small enough so as to be roughly constant. Based on these considerations, our method has a practically faster time complexity.

\subsection{Converting Notation of Randomized Methods}\label{sec:notation}
Halko, Martinsson, and Tropp \cite{halko2011finding} provide time complexity for computing a truncated SVD using their randomized method with function handles for computing matrix-vector and matrix-transpose-vector products. Formula 6.2 in their paper provides this complexity, and complexities for related formulations can be found in Section 6.1. Their notation differs from ours considerably. To facilitate interested readers' understanding, we provide a conversion chart below.

\vspace*{12pt}

\begin{center}
Notation Conversion\\
\vspace*{2mm}
\begin{tabular}{|l|l|l|}
\hline
\multicolumn{1}{|c|}{\textbf{Randomized}} & \multicolumn{1}{c|}{\textbf{Kronecker}} & \multicolumn{1}{c|}{\textbf{Meaning}}\\
\hline
$m$ & $N$ & The number of rows in the matrix $\mx{K}$.\\
\hline
$n$ & $N$ & The number of columns in $\mx{K}$.\\
\hline 
$p$ & N/A & Small ``pad factor'' for randomized methods.\\
\hline
$k$ & $k$ & Truncated SVD rank.\\
\hline 
N/A & $n$ & Number of rows and columns in the image; $\sqrt{N}$.\\
\hline
N/A & $r$ & Kronecker rank of $\mx{K}$.\\
\hline
$q$ & N/A & Number of power iterations for ill-conditioned $\mx{K}$.\\
\hline 
\end{tabular}
\end{center}

\vspace*{12pt}

\section{Concluding Remarks}
\label{sec:conclusions}
In this paper, we propose a new method to compute an approximate truncated singular value decomposition of a matrix using Kronecker product summation decompositions. This method gives more accurate results than previously-explored Kronecker-based methods and remains computationally feasible. We provide bounds on various error measures related to the approximation, but caution that two of the derived bounds are only useful for matrices with tight clustering of singular values. In practice, the method works both well and quickly on a variety of problems. The tests we used are a variety of image deconvolution problems, but the method is applicable for any problem for which computing the Kronecker product summation decomposition is cheap.

\newpage

\bibliographystyle{siam}
\bibliography{references}

\end{document}